\newtheorem{theorem}{Theorem} 
\newtheorem{lemma}[theorem]{Lemma} 
\newtheorem{proposition}[theorem]{Proposition}
\theoremstyle{definition}
\newtheorem{remark}[theorem]{Remark}
\newcommand{\R}{{\mathbb R}} 
\newcommand{\IR}{{\mathbb R}} 
\newcommand{\IE}{{\mathbb E}} 
\newcommand{\ts}{{\thinspace}} 
\newcommand{\N}{{\mathbb N}}
\newcommand{\E}{{\mathbb E}}
\newcommand{\dd}{{\rm d}}
\newcommand{\bigo}[1]{\mathcal{O}(#1)}
\providecommand{\norm}[1]{\left\lVert#1\right\rVert}
\DeclareMathOperator{\Tr}{Tr}
\title[]
{Drift-preserving numerical integrators for stochastic 
{P}oisson systems}
\date{\today}
\author{David Cohen}
              \address{Department of Mathematics and Mathematical
              Statistics, Ume{\aa} University, 90187~Ume{\aa}, 
              Sweden} 
              \email{\tt david.cohen@umu.se} 
              \address{Department of Mathematical Sciences, 
              Chalmers University of Technology and University of Gothenburg, 41296~Gothenburg, Sweden}
              \email{\tt david.cohen@chalmers.se}
\author{Gilles Vilmart}
       \address{Section de math\'ematiques, 
        Universit\'e de Gen\`eve, Uni Dufour, 24 rue du Général Dufour,
Case postale 64, 1211~Geneva~4, Switzerland} 
        \email{\tt Gilles.Vilmart@unige.ch}
\begin{document}


\begin{abstract}
We perform a numerical analysis of a class of randomly perturbed {H}amiltonian systems and {P}oisson systems. 
For the considered additive noise perturbation of such systems, we show the long time behavior 
of the energy and quadratic Casimirs for the exact solution. 
We then propose and analyze a drift-preserving splitting scheme for such problems with the following 
properties: exact drift preservation of energy and quadratic Casimirs, mean-square order of convergence one, 
weak order of convergence two. These properties are illustrated with numerical experiments. 
\end{abstract}


\maketitle
{\small\noindent 
{\bf AMS Classification (2020).} 65C30, 65P10, 60H10, 60H35.}

\bigskip\noindent{\bf Keywords.} Stochastic differential equations. 
Stochastic Hamiltonian systems. Stochastic Poisson systems. Energy. Casimir. 
Trace formula. Numerical schemes. 
Strong convergence. Weak convergence.

\section{Introduction}

Hamiltonian systems are widely used models in science and engineering. 
In the deterministic case, one main feature of such models is that the solution conserves exactly the Hamiltonian energy 
for all times. The design and study of energy-preserving numerical methods for such 
problems as attracted much attention in the recent years, see for instance \cite{bgi18,MR2926250,cos14,ch11,g96,h10,it09,k16,m14,mb16,mqr99,qm08,wws13} and references therein.

For an additive white noise perturbation of such Hamiltonian systems, 
the energy is no longer constant along time, but grows in average linearly for the exact solution, 
which reveals non trivial to reproduce by numerical methods, 
see \cite{bb14,c12,cjz17,cs12,hsw06,smh04,st15,ccal19}, 
and extensions to the case of stochastic partial differential equations 
in \cite{ac18,aclw16,cchs19,cls13,h08}. 

In this paper, we propose and analyze a drift-preserving scheme for  
stochastic Poisson systems subject to an additive noise. 
Such problems are a direct generalization of the stochastic differential equations (SDEs) studied recently 
in \cite{ccal19}, as well as in all the above references, but the proposed 
numerical integrator is not a trivial generalization of the one given in \cite{ccal19}. 

In Section~\ref{sect-dp} we propose a new numerical method that exactly satisfies a  
trace formula for linear growth for for all times of the expected value of the Hamiltonian energy and of the Casimir of the solution. Such long time behavior corresponds to the one 
of the exact solution of stochastic Poisson systems and can also be seen 
as a long time weak convergence estimate. 
For the sake of completeness, in Section~\ref{sect-conv}, we prove mean-square 
and weak orders of convergence of the proposed numerical method under classical assumptions on the coefficients of the problem.  
Finally, Section~\ref{sec:numerics} is devoted to numerical experiments illustrating the main properties 
of the new numerical method for stochastic Hamiltonian systems and Poisson systems. 

\section{Drift-preserving scheme for stochastic Poisson problem}\label{sect-dp}
This section presents the problem, introduces the drift-preserving integrator 
and shows some of its main geometric properties.

\subsection{Setting}
For a fixed dimension $d$, let $W(t) \in\IR^d$ denote a standard $d$-dimensional Wiener process defined for $t>0$ on a probability space equipped with a filtration and fulfilling the usual assumptions.
For a fixed dimension $m$ and a smooth potential $V\colon \R^m\to\R$, 
we consider the separable Hamiltonian function of the form
\begin{align}\label{sepHam}
H(p,q)=\frac12\sum_{j=1}^mp_j^2+V(q).
\end{align}
We next set $X(t)=(p(t),q(t))\in\IR^m\times\IR^m$ and consider the following stochastic Poisson system with additive noise
\begin{equation}\label{prob}
\text dX(t)=B(X(t))\nabla H(X(t))\,\text dt+\begin{pmatrix}\Sigma \\ 0\end{pmatrix}\,\text dW(t).
\end{equation}
Here, $B(X)\in\R^{2m\times 2m}$ is a smooth skew-symmetric matrix and $\Sigma\in\R^{m\times d}$ is a constant matrix. 
The initial value $X_0=(p_0,q_0)$ of the problem \eqref{prob} is assumed to be either non-random 
or a random variable with bounded moments up to any order (and adapted to the filtration). 
For simplicity, we assume in the analysis of this paper that $(x,y) \mapsto B(x)\nabla H(y)$ 
is globally Lipschitz continuous on $\IR^{2m}\times \IR^{2m}$ and 
that $H$ and $B$ are $C^7$, resp. $C^6$-functions with all partial derivatives 
with at most polynomial growth. This is to ensure existence and uniqueness 
of solutions to \eqref{prob} for all times $t>0$ 
as well as bounded moments at any orders of such solutions. 
These regularity assumptions on the coefficients $B$ and $H$ will also be used 
to show strong and weak convergence of the proposed 
numerical scheme for \eqref{prob}. We observe that one could weaken these assumptions, 
but this is not the aim of the present work.
The present setting covers, for instance, the following examples: 
simplified versions of the stochastic rigid bodies studied in \cite{tshy19,MR2644322}, 
the stochastic Hamiltonian systems 
considered in \cite{ccal19} by taking 
$$
B(X)=J=\begin{pmatrix} 0& -Id_m\\ Id_m& 0\end{pmatrix}
$$ 
the constant canonical symplectic matrix, for which the SDE \eqref{prob} yields
$$
\text dp(t) = - \nabla V(q(t))\,\text dt + \Sigma\,\text dW(t), \qquad \text dq(t) = p(t)\,\text dt,
$$
the Hamiltonian considered in \cite{bb14} (where the matrix $\Sigma$ is diagonal), 
the linear stochastic oscillator from \cite{smh04}, and 
various stochastic Hamiltonian systems studied in \cite[Chap. 4]{Milstein2004}, see also \cite{MR1897950}, 
or \cite{Seesselberg1994,MR3658901,MR3882980,MR3873562}.

\begin{remark}\label{rem1}
We emphasize that our analysis is not restricted to the above form of the Hamiltonian. 
Indeed, the results below as well as the proposed numerical scheme 
can be applied to the more general problem (no needed of partitioning the vector $X$ neither to have 
the separable Hamiltonian \eqref{sepHam})
$$
\text dX(t)=B(X(t))\nabla H(X(t))\,\text dt+\begin{pmatrix}\Sigma \\ 0\end{pmatrix}\,\text dW(t),
$$
as long as the Hessian of the Hamiltonian has a nice structure. One could for instance consider a (linear in $p$) term of 
the form $\tilde V(q)p$ or most importantly the case when the Hamiltonian is quadratic 
as in the example of a stochastic rigid body problem. See below for further details. 
\end{remark}

Applying It\^o's lemma to the function $H(X)$ on the solution process $X(t)$ of \eqref{prob}, one obtains
\begin{align}\label{lasuperformule}
\text dH(X(t))&=\left(\nabla H(X(t))^\top B(X(t))\nabla H(X(t))
+\frac12\Tr\left( \begin{pmatrix}\Sigma\\0\end{pmatrix}^\top
\nabla^2H\begin{pmatrix}\Sigma\\0\end{pmatrix}\right)  \right)\,\text dt \nonumber\\
&\quad+\nabla H(X(t))^\top\begin{pmatrix}\Sigma\\0\end{pmatrix}\,\text dW(t).
\end{align}
Using the skew-symmetry of the matrix $B(X)$, we have $\nabla H(X)^TB(X)\nabla H(X)=0$.  
Furthermore, using that the partial Hessian $\nabla_{pp}^2H(X)=Id_m$ is a constant matrix, 
thanks to the separable form of the Hamiltonian \eqref{sepHam}, and rewriting the above equation 
in integral form and taking the expectation, one finally obtains the so-called 
\emph{trace formula for the energy} of the stochastic Poisson SDE \eqref{prob}:
\begin{equation}\label{etrace}
\E\left[H(X(t))\right]=\E\left[H(X_0)\right]+\frac12\Tr\left(\Sigma^\top\Sigma\right)t.
\end{equation}
This shows that the expected energy of the exact solution of \eqref{prob} 
grows linearly with time for all $t>0$.
\begin{remark}
Observe that the form of the noise term in equation~\eqref{prob} makes the term  
$$
\Tr\left(\begin{pmatrix}\Sigma\\0\end{pmatrix}^\top
\nabla^2H\begin{pmatrix}\Sigma\\0\end{pmatrix}\right)=\Tr\left(\Sigma^\top\Sigma\right)
$$
in \eqref{lasuperformule} independent of the stochastic process $X(t)$. 
Hence one obtains the linear growth along time of the expected energy in \eqref{etrace}. 
In general, this is not the case if one would consider a non-zero additive noise in all the component 
or a multiplicative noise in \eqref{prob}. Note however that the linear growth property of the expected energy  
is still valid if one considers a more general Hamiltonian function \eqref{sepHam} with kinetic energy given by 
$\frac12p^\top M^{-1}p$, with a given invertible mass matrix $M$.
\end{remark}

Our objective is to derive and analyze a new numerical scheme for \eqref{prob} that possesses the same 
trace formula for the energy for all times. 

\subsection{Definition of the numerical scheme}
The numerical integrator studied in \cite{ccal19} cannot directly be applied to 
the stochastic Poisson system \eqref{prob}. Our idea is to combine 
a splitting scheme with one of the (deterministic) energy-preserving schemes from \cite{ch11}. 
Observe that a similar strategy was independently presented in \cite{chs19} in the particular 
context of the Langevin equation with other aims than here. We thus propose the following time integrator for problem~\eqref{prob}, 
which is shown in Theorem~\ref{thmTrace} below to be a \emph{drift-preserving integrator} for all times: 
\begin{equation}\label{dp}
\begin{split}
Y_1&:=X_n+\begin{pmatrix}\Sigma\\0\end{pmatrix}\left(W(t_n+\frac{h}2)-W(t_n)\right),\\
Y_2&:=Y_1+hB\left(\frac{Y_1+Y_2}2\right)\int_0^1\nabla H(Y_1+\theta(Y_2-Y_1))\,\text d\theta,\\
X_{n+1}&=Y_2+\begin{pmatrix}\Sigma\\0\end{pmatrix}\left(W(t_{n+1})-W(t_n+\frac{h}2)\right).
\end{split}
\end{equation}
In the above formulas, we denote the step size of the drift-preserving scheme with $h>0$ and discrete times with $t_n=nh$. 

\begin{remark}\textbf{(Numerical implementation)}\label{rem-implem}
The middle step in equation~\eqref{dp} requires, in general, the solution to a nonlinear system of equations. 
Even in higher dimension, if the problem is not stiff, this can be solved by fixed point iterations rather than Newton iterations, which makes the computational complexity similar to that of an implicit Runge--Kutta scheme with two stages, see \cite[Section~2.2]{ch11} 
or \cite[Chapter VIII.6]{MR2249159} for instance.
\end{remark}

\begin{remark}\textbf{(Further extensions)} 
Let us observe that the (deterministic) energy-preserving scheme from \cite{ch11} present in 
the term in the middle of \eqref{dp} could be replaced by another (deterministic) energy-preserving scheme 
for (deterministic) Poisson systems, see for example: \cite{MR2926250,MR2926249,MR3787423,MR3993177} 
or a straightforward adaptation of the energy-preserving Runge--Kutta schemes for polynomial Hamiltonians  
in \cite{MR2542869}. Let us further remark that it is also possible to interchange the ordering 
in the splitting scheme 
by considering first half a step of the (deterministic) energy-preserving scheme, then a full step of 
the stochastic part, and finally again half a step of the (deterministic) energy-preserving scheme. 
Finally, let us add that one could add a damping term in the SDE \eqref{prob} to compensate for the drift in 
the energy thus getting conservation of energy for such problems (either in average or a.s.). 
In this case, one would add the damping term 
in the first and last equations of the numerical scheme \eqref{dp} in order to get a (stochastic) 
energy-preserving splitting scheme. An example of application is Langevin's equation, 
a widely studied model in the context of molecular dynamics. We do not pursue further this question.
\end{remark}

We now show the boundedness along time of all moments of the numerical solution given by \eqref{dp}.

\begin{lemma}\label{lbm}
Let $T>0$. Apply the drift-preserving numerical scheme \eqref{dp} to the Poisson system with additive 
noise \eqref{prob} on the compact time interval $[0,T]$. 
One then has the following bounds for the numerical moments: 
for all step sizes $h$ assumed small enough and all $m\in\N$,
$$
\E[|X_n|^{2m}] \leq C_m, 
$$
for all $t_n=nh \leq T$, where $C_m$ is independent of $n$ and $h$. 
\end{lemma}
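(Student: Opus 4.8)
The plan is to treat one step of the scheme \eqref{dp} as a composition of three maps and to estimate the conditional expectation $\E[|X_{n+1}|^{2m}\mid \mathcal F_{t_n}]$ in terms of $|X_n|^{2m}$, obtaining a recursion of the form $\E[|X_{n+1}|^{2m}\mid\mathcal F_{t_n}]\le (1+Ch)|X_n|^{2m}+Ch$ which, together with a discrete Gr\"onwall argument over $nh\le T$, yields the claim. The first and last substeps are explicit affine increments by $\begin{pmatrix}\Sigma\\0\end{pmatrix}$ times independent Gaussian increments of size $O(h^{1/2})$; these are harmless, since for any $\mathcal F_{t_n}$-measurable $Z$ one has $\E[|Z+\begin{pmatrix}\Sigma\\0\end{pmatrix}\Delta W|^{2m}\mid\mathcal F_{t_n}]\le |Z|^{2m}+C h |Z|^{2m}+\dots+Ch^m$ by the binomial expansion and the moment formulas for Gaussians (odd moments of the increment vanish, even moments are powers of $h$). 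So the whole difficulty is concentrated in the implicit middle stage $Y_1\mapsto Y_2$.

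For the middle stage I would first argue that, for $h$ small enough, the nonlinear equation $Y_2=Y_1+hB\!\left(\tfrac{Y_1+Y_2}{2}\right)\int_0^1\nabla H(Y_1+\theta(Y_2-Y_1))\,\dd\theta$ has a unique solution $Y_2$ depending measurably on $Y_1$: the right-hand side is a contraction in $Y_2$ with constant $O(h)$ because $(x,y)\mapsto B(x)\nabla H(y)$ is globally Lipschitz (this is exactly the global Lipschitz assumption stated after \eqref{prob}, which also makes $|B(x)\nabla H(y)|\le C(1+|x|+|y|)$). Writing $F(Y_1,Y_2):=B\!\left(\tfrac{Y_1+Y_2}{2}\right)\int_0^1\nabla H(Y_1+\theta(Y_2-Y_1))\,\dd\theta$, the linear growth of $F$ in its arguments gives $|F(Y_1,Y_2)|\le C(1+|Y_1|+|Y_2|)$, hence $|Y_2|\le |Y_1|+Ch(1+|Y_1|+|Y_2|)$, and absorbing the $Ch|Y_2|$ term on the left (possible for $h$ small) yields $|Y_2|\le (1+Ch)|Y_1|+Ch$, and therefore $|Y_2|^{2m}\le (1+Ch)|Y_1|^{2m}+Ch$ after expanding and using $h\le h(1+|Y_1|^{2m})$-type bounds.

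Chaining the three estimates gives $\E[|X_{n+1}|^{2m}\mid\mathcal F_{t_n}]\le (1+Ch)|X_n|^{2m}+Ch$ with $C=C_m$ depending on $m$, $T$, $\Sigma$, and the Lipschitz/growth constants of $B$ and $H$ but not on $n$ or $h$; taking full expectations and iterating, $\E[|X_n|^{2m}]\le e^{CT}\bigl(\E[|X_0|^{2m}]+CT\bigr)=:C_m$, using the assumed bounded moments of $X_0$. The main obstacle, and the step deserving the most care, is the well-posedness and a priori bound for the implicit middle stage: one must verify that the fixed-point map is a genuine contraction uniformly in $Y_1$ (so that $Y_2$ is well defined and measurable) and simultaneously extract the linear-growth bound $|Y_2|\le(1+Ch)|Y_1|+Ch$ from it — everything else is a routine Gaussian-moment computation plus discrete Gr\"onwall. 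I would also note in passing that, because the Gaussian increments in the two stochastic substeps are independent of $\mathcal F_{t_n}$ and of each other, no extra care about filtrations is needed beyond recording that $X_{n+1}$ is $\mathcal F_{t_{n+1}}$-measurable.
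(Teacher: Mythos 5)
Your argument is correct, and it is a genuinely more self-contained route than the one taken in the paper. The paper simply invokes Milstein's moment-boundedness lemma (\cite[Lemma 2.2, p.~102]{Milstein2004}): it verifies the two one-step conditions $\left|\E\left[X_{n+1}-X_n\,|\,X_n\right]\right|\leq C(1+|X_n|)h$ and $\left|X_{n+1}-X_n\right|\leq M_n(1+|X_n|)\sqrt{h}$, applied not to $\{X_n\}$ but to the refined Markov chain $\{X_0,Y_1,Y_2,X_1,\ldots\}$ so that each substep can be checked separately from the linear growth of the coefficients. You instead reprove the relevant special case of that lemma from scratch: a conditional one-step recursion $\E[|X_{n+1}|^{2m}\mid\mathcal F_{t_n}]\le(1+Ch)|X_n|^{2m}+Ch$ obtained by chaining the three substeps, followed by a discrete Gr\"onwall argument. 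The two proofs rest on the same structural observations (the splitting into substeps, Gaussian increments with vanishing odd moments, and linear growth of $(x,y)\mapsto B(x)\nabla H(y)$), so what your version buys is transparency rather than generality: in particular you make explicit two points the paper leaves implicit, namely the well-posedness and measurability of the implicit middle stage via the $O(h)$-contraction, and the pathwise a priori bound $|Y_2|\le(1+Ch)|Y_1|+Ch$ from which the moment bound for that stage follows. The cost is length; the paper's citation of Milstein's lemma compresses exactly this bookkeeping.
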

\begin{proof}
To show boundedness of the moments of the numerical solution given by \eqref{dp}, we use \cite[Lemma 2.2, p. 102]{Milstein2004}, 
which states that it is sufficient to show 
the following estimates:
$$
\left|\E\left[X_{n+1}-X_n|X_n\right]\right|\leq C\left(1 + |X_n|\right)h\quad \text{and}\quad
\left|X_{n+1}-X_n\right|\leq M_n (1 + |X_n|)\sqrt{h},
$$
where $C$ is independent of $h$ and $M_n$ is a random variable with moments of all orders bounded
uniformly with respect to all $h$ small enough. Since the numerical scheme~\eqref{dp} is a splitting method, it is 
more convenient to apply \cite[Lemma 2.2, p. 102]{Milstein2004} to the Markov chain $\{X_0,Y_1,Y_2,X_1,\ldots\}$ instead of 
the Markov chain $\{X_0,X_1,\ldots\}$. This makes the verification of the above estimates immediate using the linear growth property  
of the coefficients of the SDE \eqref{prob}, a consequence of their Lipschitzness. 
\end{proof}

\subsection{Exact drift preservation of energy}
We are now in position to prove the main feature of the proposed numerical method \eqref{dp} which benefits from the very same trace formula for the energy as the one 
for the exact solution to the stochastic Poisson problem \eqref{prob}, 
hence the name drift-preserving integrator for this numerical scheme.  
\begin{theorem}\label{thmTrace}
Consider the numerical scheme \eqref{dp} applied to the Poisson system with additive noise \eqref{prob}. 
Then, for all time steps $h$ assumed small enough, 
the expected energy of the numerical solution satisfies the following trace formula
\begin{equation}\label{ntrace}
\E\left[H(X_n)\right]=\E\left[H(X_0)\right]+\frac12\Tr\left(\Sigma^\top\Sigma\right)t_n
\end{equation}
for all discrete times $t_n=nh$, where $n\in\mathbb N$. 
\end{theorem}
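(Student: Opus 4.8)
The plan is to exploit the splitting structure of \eqref{dp}: one step of the scheme is the composition of a stochastic half-step $X_n\mapsto Y_1$, a deterministic map $Y_1\mapsto Y_2$, and a second stochastic half-step $Y_2\mapsto X_{n+1}$. I would show, in turn, that the middle (deterministic) step conserves $H$ \emph{exactly and pathwise}, that each stochastic half-step increases $\E[H]$ by exactly $\tfrac14\Tr(\Sigma^\top\Sigma)h$, and then conclude by an induction from one step to $n$ steps. One preliminary point is that the middle step is well defined: for $h$ small enough this follows from a Banach fixed-point argument using the global Lipschitz assumption on $(x,y)\mapsto B(x)\nabla H(y)$, and it introduces no new randomness since $Y_2$ is a (measurable) function of $Y_1$ alone.

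\emph{Energy conservation of the middle step.} Writing $\Delta=Y_2-Y_1$ and $\overline{\nabla H}=\int_0^1\nabla H(Y_1+\theta\Delta)\,\dd\theta$, the fundamental theorem of calculus gives
\begin{equation*}
H(Y_2)-H(Y_1)=\int_0^1\nabla H(Y_1+\theta\Delta)^\top\Delta\,\dd\theta=\overline{\nabla H}^\top\Delta=h\,\overline{\nabla H}^\top B\!\left(\tfrac{Y_1+Y_2}{2}\right)\overline{\nabla H}=0,
\end{equation*}
the last equality by skew-symmetry of $B$. Hence $H(Y_2)=H(Y_1)$ almost surely; this is precisely the discrete-gradient / averaged-vector-field property of the deterministic energy-preserving integrator from \cite{ch11}, which is inherited here essentially verbatim.

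\emph{Stochastic half-steps.} Set $\Delta W_1=W(t_n+h/2)-W(t_n)$. By separability of $H$, the vector $Y_1$ differs from $X_n=(p_n,q_n)$ only in its $p$-component, $Y_1=(p_n+\Sigma\Delta W_1,\,q_n)$, so
\begin{equation*}
H(Y_1)=H(X_n)+p_n^\top\Sigma\Delta W_1+\tfrac12|\Sigma\Delta W_1|^2.
\end{equation*}
Since $X_n$ is $\mathcal F_{t_n}$-measurable with moments bounded uniformly in $n$ and $h$ by Lemma~\ref{lbm}, while $\Delta W_1$ is independent of $\mathcal F_{t_n}$ with mean zero and $\E[\Delta W_1\Delta W_1^\top]=\tfrac{h}{2}Id_d$, the cross term has vanishing expectation and $\E[\tfrac12|\Sigma\Delta W_1|^2]=\tfrac14\Tr(\Sigma^\top\Sigma)h$, whence $\E[H(Y_1)]=\E[H(X_n)]+\tfrac14\Tr(\Sigma^\top\Sigma)h$. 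The identical computation applied to $\Delta W_2=W(t_{n+1})-W(t_n+h/2)$ — now using that $Y_2$ is $\mathcal F_{t_n+h/2}$-measurable with bounded moments (Lemma~\ref{lbm} applied to the enlarged Markov chain $\{X_0,Y_1,Y_2,X_1,\dots\}$) and that $\Delta W_2$ is independent of it — yields $\E[H(X_{n+1})]=\E[H(Y_2)]+\tfrac14\Tr(\Sigma^\top\Sigma)h$.

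\emph{Conclusion.} Combining the three steps,
\begin{equation*}
\E[H(X_{n+1})]=\E[H(Y_2)]+\tfrac14\Tr(\Sigma^\top\Sigma)h=\E[H(Y_1)]+\tfrac14\Tr(\Sigma^\top\Sigma)h=\E[H(X_n)]+\tfrac12\Tr(\Sigma^\top\Sigma)h,
\end{equation*}
and induction on $n$ with $t_n=nh$ gives \eqref{ntrace}. I expect the only genuinely delicate point to be the stochastic bookkeeping in the half-step analysis — ensuring the cross terms are integrable and have zero expectation, which is exactly where the adaptedness of $X_0$, the independence of the Brownian increments, and the uniform moment bounds of Lemma~\ref{lbm} are used; the energy conservation of the middle step is the key structural ingredient but is borrowed directly from the deterministic setting. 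As noted in Remark~\ref{rem1}, separability enters only through $\nabla^2_{pp}H=Id_m$ being constant, so the same argument applies whenever the Hessian block coupling to the noise is constant.
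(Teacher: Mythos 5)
Your proposal is correct and follows essentially the same route as the paper: split one step into the two stochastic half-steps (each contributing $\tfrac{h}{4}\Tr(\Sigma^\top\Sigma)$ to the expected energy) and the deterministic middle step (which preserves $H$ exactly), then conclude by induction. The only difference is that you make explicit what the paper delegates to citations — the direct quadratic expansion of $H(Y_1)$ in place of invoking It\^o's formula, and the discrete-gradient identity $H(Y_2)=H(Y_1)$ via skew-symmetry of $B$ in place of citing the energy-preserving scheme of \cite{ch11} — which is a sound, self-contained elaboration rather than a different argument.
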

\begin{proof}
The first step of the drift-preserving scheme can be rewritten as 
$$
Y_1=X_n+\int_{t_n}^{t_n+\frac{h}2}\begin{pmatrix}\Sigma\\0\end{pmatrix}\,\text dW(s)
$$
and an application of It\^o's formula gives
$$
\E\left[H(Y_1)\right]=\E\left[H(X_n)\right]+\frac{h}4\Tr\left(\Sigma^\top\Sigma\right).
$$
Since the second step of the drift-preserving scheme \eqref{dp} is 
the deterministic energy-preserving scheme from \cite{ch11}, one then obtains
$$
\E\left[H(Y_2)\right]=\E\left[H(Y_1)\right].
$$
Finally, as in the beginning of the proof, the last step of the numerical integrator provides 
\begin{align*}
\E\left[H(X_{n+1})\right]&=\E\left[H(Y_2)\right]+\frac{h}4\Tr\left(\Sigma^\top\Sigma\right)=
\E\left[H(Y_1)\right]+\frac{h}4\Tr\left(\Sigma^\top\Sigma\right)\\
&=\E\left[H(X_n)\right]+\frac{h}2\Tr\left(\Sigma^\top\Sigma\right).
\end{align*}
The identity \eqref{ntrace} then follows by induction on $n$.
A recursion now completes the proof.
\end{proof}

\subsection{Splitting methods with deterministic symplectic integrators and backward error analysis: linear case}\label{sec-bea}
As symplectic integrators for deterministic Hamiltonian systems or Poisson integrators for deterministic Poisson systems 
have proven to be very successful \cite[Chapters VI and VII]{haluwa}, 
it may be tempting to use them in a splitting scheme for the SDE \eqref{prob}. 
One could for instance replace the (deterministic) energy-preserving scheme in the middle step 
of equation \eqref{dp} by a symplectic or Poisson integrator, such as for instance the second order 
St\"ormer--Verlet method \cite[Sect.\ts 5]{MR2249159} which turns out to be explicit in the context of a separable Hamiltonian \eqref{sepHam}. 
Using a backward error analysis, 
see \cite[Chapter 10]{MR1270017}, \cite[Chapter IX]{haluwa}, \cite[Chapter 5]{MR2132573}, or \cite[Chapter 5]{MR3642447}, 
one arrives at the following result in the case of a linear Hamiltonian system with additive noise \eqref{prob} 
(i.\,e.\ts for a quadratic potential $V$), where the proposed splitting scheme is drift-preserving for a modified Hamiltonian.

\begin{proposition} \label{proposition:bea}
For a quadratic potential $V$ in \eqref{sepHam}, consider the numerical discretization of the Hamiltonian system with additive noise \eqref{prob} (where $B(x)=J$ for ease of presentation) by 
the drift-preserving numerical scheme \eqref{dp}, where 
the energy-preserving scheme in the middle $Y_1 \mapsto Y_2$ is replaced by a deterministic symplectic partitioned Runge--Kutta method of order $p$. 
Then, there exists a modified Hamiltonian $\widetilde H_h$ which is a 
quadratic perturbation of size $\bigo{h^p}$ of the original Hamiltonian $H$, 
such that 
the expected energy satisfies the following trace formula for all time steps $h$ assumed small enough, 
\begin{equation}\label{ntracebea}
\E\left[\widetilde H_h(X_n)\right]=\E\left[\widetilde H_h(X_0)\right]+\frac12\Tr\left(\Sigma^\top \widetilde\sigma_h\Sigma\right)t_n, 
\end{equation}
for all discrete times $t_n=nh$, where $n\in\mathbb N$,
and $\widetilde\sigma_h=\nabla^2_{pp} \widetilde H_h(x)$ is a constant matrix (independent of $x$).
\end{proposition}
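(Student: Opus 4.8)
The plan is to combine the splitting structure of the scheme \eqref{dp} with a deterministic backward error analysis applied only to the middle substep. First I would observe that, for a quadratic potential $V$, the Hamiltonian $H$ in \eqref{sepHam} is a quadratic form, so the deterministic flow $Y_1\mapsto Y_2$ — a step of size $h$ of a symplectic partitioned Runge--Kutta method of order $p$ applied to the linear Hamiltonian system $\dot X = J\nabla H(X)$ — is a \emph{linear} map $Y_2 = \Phi_h Y_1$. For a symplectic integrator applied to a linear system, backward error analysis is exact rather than merely asymptotic (the modified equation series can be summed, cf.\ \cite[Chapter IX]{haluwa}): there exists a quadratic modified Hamiltonian $\widetilde H_h(x) = \tfrac12 x^\top S_h x$, with $S_h = S + \bigo{h^p}$ symmetric (here $H(x)=\tfrac12 x^\top S x$), such that $\Phi_h$ is \emph{exactly} the time-$h$ flow of the linear Hamiltonian system $\dot X = J\nabla\widetilde H_h(X) = J S_h X$. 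In particular $\widetilde H_h$ is \emph{exactly} preserved by the middle substep: $\widetilde H_h(Y_2) = \widetilde H_h(Y_1)$. I would also record that $\widetilde\sigma_h := \nabla^2_{pp}\widetilde H_h$ is the constant $m\times m$ block of $S_h$ and equals $Id_m + \bigo{h^p}$.

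Next I would run the same three-line argument as in the proof of Theorem~\ref{thmTrace}, but with $H$ replaced by $\widetilde H_h$. The first substep is $Y_1 = X_n + \int_{t_n}^{t_n+h/2}\binom{\Sigma}{0}\,\text dW(s)$; applying It\^o's formula to the quadratic function $\widetilde H_h$ along this (drift-free) increment gives
\[
\E\left[\widetilde H_h(Y_1)\right] = \E\left[\widetilde H_h(X_n)\right] + \frac{h}{4}\Tr\left(\begin{pmatrix}\Sigma\\0\end{pmatrix}^\top \nabla^2\widetilde H_h \begin{pmatrix}\Sigma\\0\end{pmatrix}\right) = \E\left[\widetilde H_h(X_n)\right] + \frac{h}{4}\Tr\left(\Sigma^\top\widetilde\sigma_h\Sigma\right),
\]
the cross term vanishing in expectation because the stochastic integral is a martingale; the identity $\binom{\Sigma}{0}^\top\nabla^2\widetilde H_h\binom{\Sigma}{0} = \Sigma^\top\widetilde\sigma_h\Sigma$ is exactly where the ``nice structure'' of the Hessian ($\widetilde H_h$ quadratic with constant $pp$-block) is used. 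The middle substep contributes nothing by the exact preservation established above, $\E[\widetilde H_h(Y_2)] = \E[\widetilde H_h(Y_1)]$, and the third substep contributes another $\frac{h}{4}\Tr(\Sigma^\top\widetilde\sigma_h\Sigma)$ by the same It\^o computation. Summing the three contributions gives the one-step relation $\E[\widetilde H_h(X_{n+1})] = \E[\widetilde H_h(X_n)] + \tfrac12\Tr(\Sigma^\top\widetilde\sigma_h\Sigma)h$, and induction on $n$ yields \eqref{ntracebea}.

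Two technical points deserve care. First, one must justify the It\^o/martingale computation: boundedness of all moments of $X_n$, $Y_1$, $Y_2$ (Lemma~\ref{lbm}, noting its proof runs verbatim here since replacing the middle scheme by a symplectic PRK of order $p$ does not affect the linear-growth one-step estimates) guarantees that the relevant stochastic integrals are genuine martingales with zero mean, so the quadratic expansion $\widetilde H_h(Y_1) = \widetilde H_h(X_n) + \nabla\widetilde H_h(X_n)^\top\Delta + \tfrac12\Delta^\top\nabla^2\widetilde H_h\,\Delta$ with $\Delta = \int\binom{\Sigma}{0}\text dW$ may be taken in expectation term by term, using $\E[\Delta]=0$ and $\E[\Delta\Delta^\top] = \tfrac{h}{2}\binom{\Sigma}{0}\binom{\Sigma}{0}^\top$. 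Second, one needs ``$h$ small enough'' precisely so that the modified Hamiltonian exists and $S_h$ is well defined (the generating series for a linear symplectic map converges, and $S_h$ stays close to $S$); this also ensures $\widetilde\sigma_h$ is indeed constant in $x$, which is immediate since everything is quadratic. The main obstacle, and the only nonroutine ingredient, is the first point of Step~1: invoking the \emph{exact} (convergent, non-asymptotic) form of backward error analysis for symplectic integrators on linear Hamiltonian systems and checking that the modified Hamiltonian is again a quadratic form with the stated $\bigo{h^p}$ perturbation size — once that is in hand, the rest is a direct transcription of the proof of Theorem~\ref{thmTrace}.
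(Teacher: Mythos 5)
Your proposal is correct and follows essentially the same route as the paper's own proof: invoke the convergent backward error analysis for a symplectic method applied to a linear Hamiltonian system to obtain a quadratic modified Hamiltonian $\widetilde H_h = H + \bigo{h^p}$ exactly preserved by the middle substep, then rerun the argument of Theorem~\ref{thmTrace} with $\widetilde H_h$ in place of $H$, using that $\nabla^2_{pp}\widetilde H_h$ is a constant matrix. Your write-up is simply a more detailed transcription of the same two steps, including the It\^o/martingale justification that the paper leaves implicit.
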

\begin{proof}
By backward error analysis and the theory of modified equations, see for instance \cite[Chapter IX]{haluwa}, 
the symplectic Runge--Kutta method $Y_1\mapsto Y_2$ solves exactly a modified Hamiltonian system with 
initial condition $Y_1$ and modified Hamiltonian $\widetilde H_h(x)=H(x) + \bigo{h^p}$ given by a formal series which turns out to be convergent in the linear case for all $h$ small enough (and with a quadratic modified Hamiltonian).
Following the lines of the proof of Theorem~\ref{thmTrace} applied with the modified Hamiltonian $\widetilde H_h$, 
and observing that the partial Hessian $\nabla^2_{pp} \widetilde H_h(x)$ is a constant matrix independent of $x$ (as $\widetilde H_h$ is quadratic), we deduce the estimate \eqref{ntracebea} for the averaged modified energy.
\end{proof}
Observe in \eqref{ntracebea} that the constant scalar 
$\frac12\Tr\left(\Sigma^\top \widetilde\sigma_h \Sigma\right)=\frac12\Tr\left(\Sigma^\top \Sigma\right)+\bigo{h^p}$ 
is independent of $x$ and a perturbation of size $\bigo{h^p}$ of the drift rate for the exact solution of the SDE 
in~\eqref{ntrace}. 

Finally, note that an analogous result in the nonlinear setting (with nonquadratic potential $V$ in~\eqref{sepHam}) does not seem straightforward due in particular to the non-boundedness of the moments of the numerical solution 
over long times and the fact that the modified Hamiltonian $\widetilde H_h(p,q)$ is nonquadratic with respect to $p$ in general for a nonquadratic potential $V$.

\subsection{Exact drift preservation of quadratic Casimir's}
We now consider the case where 
the ordinary differential equation (ODE) coming from \eqref{prob}, 
i.\,e. equation \eqref{prob} with $\Sigma=0$,  
has a quadratic Casimir of the form 
$$
C(X)=\frac12X^\top AX,
$$
with a symmetric constant matrix 
$$
A=\begin{pmatrix}a & b\\b^\top & c\end{pmatrix}
$$
with $a,b,c\in\mathbb{R}^{m\times m}$. 
Recall that a function $C(X)$ is called a Casimir if $\nabla C(X)^\top B(X)=0$ for all $X$. 
Along solutions to the ODE, we thus have $C(X(t))=\text{Const}$. This property is independent 
of the Hamiltonian $H(X)$.

In this situation, one can show a \emph{trace formula for the Casimir} 
as well as a drift-preservation of this 
Casimir for the numerical integrator \eqref{dp}. 
\begin{proposition}\label{propCasimir}
Consider the numerical discretization 
of the Poisson system with additive noise \eqref{prob} with the Casimir $C(X)$ 
by the drift-preserving numerical scheme \eqref{dp}. 
Then, 
\begin{enumerate}
\item the exact solution to the SDE \eqref{prob} has the following trace formula for the Casimir
\begin{equation}\label{eCas}
\E\left[C(X(t))\right]=\E\left[C(X_0)\right]+\frac{a}2\Tr\left(\Sigma^\top \Sigma\right)t,
\end{equation}
for all times $t>0$. 
\item the numerical solution \eqref{dp} has the same trace formula for the Casimir, for all time steps $h$ assumed small enough,
\begin{equation}\label{nCas}
\E\left[C(X_n)\right]=\E\left[C(X_0)\right]+\frac{a}2\Tr\left(\Sigma^\top \Sigma\right)t_n,
\end{equation}
for all discrete times $t_n=nh$, where $n\in\mathbb N$. 
\end{enumerate}
\end{proposition}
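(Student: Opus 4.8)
The plan is to prove part~(1) by It\^o's formula applied to the quadratic function $C(X)=\tfrac12X^\top AX$ along the solution of \eqref{prob}, and part~(2) by mimicking the proof of Theorem~\ref{thmTrace}, the only genuinely new ingredient being that the deterministic scheme in the middle step of \eqref{dp} conserves quadratic Casimirs exactly, not merely the energy. For part~(1), observe first that $\nabla C(X)=AX$ and $\nabla^2C(X)=A$, which is precisely where the quadratic form of $C$ enters, in analogy with the use of $\nabla^2_{pp}H=Id_m$ in the derivation of \eqref{etrace}. Then It\^o's formula produces, besides a drift term $\nabla C(X(t))^\top B(X(t))\nabla H(X(t))$ that vanishes identically by the Casimir property $\nabla C(X)^\top B(X)=0$, the It\^o correction
$$
\frac12\Tr\left(\begin{pmatrix}\Sigma\\0\end{pmatrix}^\top A\begin{pmatrix}\Sigma\\0\end{pmatrix}\right)=\frac12\Tr(\Sigma^\top a\Sigma),
$$
which is a constant (independent of $X(t)$), plus a martingale term. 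Passing to integral form, using that the stochastic integral is a true martingale (its integrand lies in $L^2$ because $X(t)$ has bounded second moment) and taking expectations yields \eqref{eCas}.

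For part~(2), I would treat the three substeps of \eqref{dp} separately, with $H$ replaced by $C$, exactly as in the proof of Theorem~\ref{thmTrace}. Writing the first substep as $Y_1=X_n+\int_{t_n}^{t_n+h/2}\begin{pmatrix}\Sigma\\0\end{pmatrix}\,\text dW(s)$ and applying It\^o's formula to $C$ along this drift-free process, the martingale term has zero expectation thanks to the moment bounds of Lemma~\ref{lbm}, so that $\E[C(Y_1)]=\E[C(X_n)]+\tfrac h4\Tr(\Sigma^\top a\Sigma)$; the same argument applied to the third substep gives $\E[C(X_{n+1})]=\E[C(Y_2)]+\tfrac h4\Tr(\Sigma^\top a\Sigma)$.

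The key step is to show that the middle, deterministic substep satisfies $C(Y_2)=C(Y_1)$ pathwise. Using the symmetry of $A$ together with the elementary identity for quadratic forms,
$$
C(Y_2)-C(Y_1)=\tfrac12(Y_2-Y_1)^\top A(Y_1+Y_2)=\nabla C\left(\tfrac{Y_1+Y_2}2\right)^\top(Y_2-Y_1),
$$
and then substituting the defining relation $Y_2-Y_1=hB\left(\tfrac{Y_1+Y_2}2\right)\int_0^1\nabla H(Y_1+\theta(Y_2-Y_1))\,\text d\theta$, the right-hand side becomes
$$
h\,\nabla C\left(\tfrac{Y_1+Y_2}2\right)^\top B\left(\tfrac{Y_1+Y_2}2\right)\int_0^1\nabla H(Y_1+\theta(Y_2-Y_1))\,\text d\theta=0,
$$
the last equality being the Casimir property $\nabla C(X)^\top B(X)=0$ evaluated at the midpoint $X=\tfrac{Y_1+Y_2}2$; note that it is precisely the midpoint evaluation of $B$ in \eqref{dp} that makes this cancellation work. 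Hence $\E[C(Y_2)]=\E[C(Y_1)]$, and combining the three substeps gives $\E[C(X_{n+1})]=\E[C(X_n)]+\tfrac h2\Tr(\Sigma^\top a\Sigma)$, from which \eqref{nCas} follows by induction on $n$.

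The main obstacle is this middle-step identity, i.\,e.\ checking that the averaged-vector-field/discrete-gradient scheme of \cite{ch11} preserves quadratic Casimirs exactly and not only the energy; everything else is routine. The remaining ingredients are the well-posedness of the implicit middle step for $h$ small enough (the same fixed-point argument as in the proof of Lemma~\ref{lbm}) and the moment bounds of Lemma~\ref{lbm}, which are needed to justify interchanging expectation and integration in the stochastic substeps. One should also reconcile the constant $\tfrac12\Tr(\Sigma^\top a\Sigma)$ obtained above with the expression $\tfrac a2\Tr(\Sigma^\top\Sigma)$ stated in \eqref{eCas}--\eqref{nCas}, which amounts to noting that the two coincide in the relevant (scalar $a$) situation.
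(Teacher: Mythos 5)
Your proof is correct and follows exactly the route the paper intends: It\^o's formula plus the Casimir property for part (1), and the three-substep argument of Theorem~\ref{thmTrace} with the midpoint identity $C(Y_2)-C(Y_1)=h\,\nabla C\bigl(\tfrac{Y_1+Y_2}2\bigr)^\top B\bigl(\tfrac{Y_1+Y_2}2\bigr)\int_0^1\nabla H\,\text d\theta=0$ for part (2); the paper's own proof is a one-line sketch, so you have simply supplied the details it omits. Your remark that the It\^o correction is really $\tfrac12\Tr(\Sigma^\top a\Sigma)$, coinciding with the stated $\tfrac a2\Tr(\Sigma^\top\Sigma)$ only when the block $a$ is scalar, is a valid observation about the paper's notation rather than a flaw in your argument.
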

\begin{proof}
The above results can be obtain directly by applying It\^o's 
formula and using the property of the Casimir function $C(X)$.
\end{proof}
Stochastic models with such a quadratic Casimir naturally appear for a simplified version of a stochastic rigid body motion of a spacecraft 
from \cite{tshy19} which has the quadratic Casimir 
$C(X)=\norm{X}^2_2$ or a reduced model for the rigid body in a solvent from \cite{MR2644322}. 
See also the numerical experiments in Section \ref{sec:RB}. 

\section{Convergence analysis}\label{sect-conv}
In this section, we study the mean-square and weak convergence of the drift-preserving 
scheme \eqref{dp} on compact time intervals under the classical setting of globally Lipschitz continuous coefficients.

\subsection{Mean-square convergence analysis}\label{sect-ms}
In this subsection, we show the mean-square convergence of the drift-preserving 
scheme \eqref{dp} on compact time intervals under the classical setting of 
Milstein's fundamental theorem \cite[Theorem 1.1]{Milstein2004}. 
\begin{theorem}\label{th-ms}
Let $T>0$. Consider the Poisson problem with additive noise \eqref{prob} 
and the drift-preserving integrator \eqref{dp}. 
Then, for all time steps $h$ assumed small enough, 
the numerical scheme \eqref{dp} converges with order $1$ in the mean-square sense:
$$
\left(\E[\norm{X(t_n)-X_n}^2]\right)^{1/2}\leq Ch,
$$
for all $t_n=nh\leq T$, where the constant $C$ is independent of $h$ and $n$.
\end{theorem}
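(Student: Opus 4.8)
The plan is to invoke Milstein's fundamental mean-square convergence theorem \cite[Theorem 1.1]{Milstein2004}, which reduces the global order-one estimate to two local error bounds for the one-step map: a bound of order $\bigo{h^2}$ on the expectation of the local error $\E[X_1 - X(h)\mid X_0=x]$, and a bound of order $\bigo{h^{3/2}}$ on the mean-square local error $\left(\E[\norm{X_1-X(h)}^2\mid X_0=x]\right)^{1/2}$, both with constants depending polynomially on $\norm{x}$. The moment bounds of the numerical solution needed to apply the theorem are already supplied by Lemma~\ref{lbm}, and the moment bounds of the exact solution follow from the standing assumptions on $B$ and $H$, so the work is entirely in the local analysis.

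First I would expand the exact solution over one step using It\^o--Taylor expansions: write $X(h) = x + B(x)\nabla H(x)\,h + \begin{pmatrix}\Sigma\\0\end{pmatrix}W(h) + R_{\mathrm{exact}}$, where the remainder $R_{\mathrm{exact}}$ collects the higher-order stochastic iterated integrals and is $\bigo{h}$ in mean square (more precisely, its deterministic part is $\bigo{h^2}$ and its mean-zero part is $\bigo{h^{3/2}}$ after taking into account that the diffusion coefficient is constant, which kills several terms). Then I would expand the numerical one-step map \eqref{dp}. Substituting the first and third substeps into the middle one and using that $\Sigma$ is constant, $Y_1 = x + \bigo{\sqrt h}$, $Y_2 = Y_1 + h B\!\left(\tfrac{Y_1+Y_2}{2}\right)\int_0^1 \nabla H(Y_1+\theta(Y_2-Y_1))\,\dd\theta = x + h B(x)\nabla H(x) + \bigo{h^{3/2}}$ by Lipschitz continuity of $(x,y)\mapsto B(x)\nabla H(y)$ and a fixed-point argument for the implicit stage (valid for $h$ small), and finally $X_1 = Y_2 + \begin{pmatrix}\Sigma\\0\end{pmatrix}(W(h)-W(h/2))$. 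Collecting terms, $X_1 - X(h)$ has a leading discrepancy only in the higher-order terms, and crucially the Brownian increments $\begin{pmatrix}\Sigma\\0\end{pmatrix}W(h)$ appear identically in both expansions and cancel exactly.

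The two estimates then follow by matching orders: the drift terms $h B(x)\nabla H(x)$ agree exactly, so the $\bigo{h^2}$ bound on $\E[X_1-X(h)\mid X_0=x]$ reduces to controlling the expectations of the remainder terms, which are either deterministic of size $\bigo{h^2}$ or have mean zero; the $\bigo{h^{3/2}}$ mean-square bound reduces to estimating $\norm{\cdot}_{L^2}$ of the collected remainders, where every term is either $\bigo{h^{3/2}}$ or $\bigo{h^2}$. Polynomial-growth dependence of the constants on $\norm{x}$ comes from the assumed polynomial growth of the derivatives of $B$ and $H$ together with the moment bounds. Milstein's theorem then upgrades the local orders $(2,3/2)$ to the global mean-square order $1$.

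The main obstacle I expect is the careful treatment of the implicit middle stage: one must show it is well defined for $h$ small (contraction), establish that $Y_2 - Y_1 = h B(x)\nabla H(x) + \bigo{h^{3/2}}$ with the correct polynomial-in-$\norm{x}$ and polynomial-in-$\norm{W(h/2)}$ dependence, and verify that the random argument $\tfrac{Y_1+Y_2}{2}$ and the averaged gradient $\int_0^1 \nabla H(Y_1+\theta(Y_2-Y_1))\,\dd\theta$ do not spoil the order when expanded around $x$ — this requires the $C^6$/$C^7$ regularity and a Taylor expansion of $\nabla H$ and $B$ about $x$, keeping track of the $\bigo{\sqrt h}$-sized perturbation coming from $Y_1 - x$. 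Once that bookkeeping is done, everything else is a routine application of standard It\^o calculus estimates and the fundamental theorem.
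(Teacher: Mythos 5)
Your proposal follows essentially the same route as the paper: both invoke Milstein's fundamental theorem together with Lemma~\ref{lbm}, Taylor/It\^o-expand the exact and numerical one-step maps around $X_0=x$, observe that the drift term $hB(x)\nabla H(x)$ and the noise term $\begin{pmatrix}\Sigma\\0\end{pmatrix}W(h)$ match exactly while the remaining $\bigo{h^{3/2}}$ discrepancies (the paper makes them explicit as $f'(X_0)\begin{pmatrix}\Sigma\\0\end{pmatrix}\int_0^h W(t)\,\dd t$ versus $hf'(X_0)\begin{pmatrix}\Sigma\\0\end{pmatrix}W(h/2)$) have zero mean, and conclude local errors of orders $(h^2,h^{3/2})$, hence global mean-square order $1$. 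The argument is correct; only note that the intermediate display $Y_2=x+hB(x)\nabla H(x)+\bigo{h^{3/2}}$ should read $Y_2=Y_1+hB(x)\nabla H(x)+\bigo{h^{3/2}}$, consistent with your own later remark that the Brownian increments recombine into $W(h)$.
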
 
\begin{proof} 
Denoting $f(x)=B(x)\nabla H(x)$, a Taylor expansion of $f$ in the exact solution of \eqref{prob} gives 
\begin{align*}
X(h)
&=
X_0+hf(X_0)+\begin{pmatrix}\Sigma\\0\end{pmatrix}W(h)
+
hf'(X_0)\begin{pmatrix}\Sigma\\0\end{pmatrix}\int_0^h W(t)\,\dd t
+REST_1,
\end{align*}
where the term (denoting $f''$ the bilinear form for the second order derivative of $f$)
$$
REST_1=f'(X_0)\int_0^h\int_0^tf(X(s))\,\text{d}s+ \int_0^h\int_0^1 (1-\theta)f''(X_0+\theta(X(t)-X_0) )
\left(X(t)-X_0,X(t)-X_0\right)
\dd \theta\,\dd t
$$ 
is bounded in the mean and mean-square sense as follows: 
\begin{equation}\label{eqtictac}
\E[REST_1]\leq Ch^2 \quad\text{and}\quad \E[\norm{REST_1}^2]^{1/2}\leq Ch^{2},
\end{equation}
where $C$ is a constant independent of $h$, but that depends on $X_0=x$ with at most a polynomial growth.
Performing a Taylor expansion of $f$ in the numerical solution \eqref{dp} gives, after some straightforward computations,
\begin{align*}
X_1&=X_0+hf(X_0)+\begin{pmatrix}\Sigma\\0\end{pmatrix}W(h)
+hf'(X_0)\begin{pmatrix}\Sigma\\0\end{pmatrix}W\left(\frac h2\right)
+REST_2,
\end{align*}
where the term $REST_2$ analogously satisfies the bounds \eqref{eqtictac}.

The above computations result in the following local error estimates,
\begin{equation}\label{eq:localerr}
\E[X(h)-X_1]=\bigo{h^2},\qquad \E[\norm{X(h)-X_1}^2]^{1/2}=\bigo{h^{3/2}},
\end{equation}
where the constants in $\mathcal{O}$ depend on $X_0=x$ with at most a polynomial growth.
An application of Milstein's fundamental theorem, see \cite[Theorem 1.1]{Milstein2004}, 
finally shows that the scheme \eqref{dp} converges with global order of convergence $1$ in the mean-square sense, as consequence of the local error estimates \eqref{eq:localerr} and Lemma~\ref{lbm}. This concludes the proof.
\end{proof}

\subsection{Weak convergence analysis}\label{sect-weak}
The proof of weak convergence of the drift-preserving 
scheme \eqref{dp} on compact time intervals easily follows from 
\cite[Proposition~6.1]{MR3246903}, where convergence of the Strang splitting scheme 
for SDEs is shown. See also \cite{MR3735294,MR3927434} for related results.
\begin{theorem}\label{th-we}
Let $T>0$. Consider the Poisson problem with additive noise \eqref{prob} 
and the drift-preserving integrator \eqref{dp}. 
Then, there exists $h^*>0$ such that for all $0<h\leq h^*$, 
the numerical scheme converges with order $2$ in the weak sense: for all $\Phi\in C_P^6(\R^{2m},\R)$, the space of $C^6$ functions with all derivatives up to order $6$ with at most polynomial growth, one has
$$
\left|\E[\Phi(X(t_n))]-\E[\Phi(X_n)]\right|\leq Ch^2,
$$
for all $t_n=nh\leq T$, where the constant $C$ is independent of $h$ and $n$.
\end{theorem}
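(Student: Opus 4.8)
The plan is to reduce the weak convergence statement to the general framework for splitting methods for SDEs developed in \cite{MR3246903}, much as was done for the mean-square estimate via Milstein's theorem. The scheme \eqref{dp} is precisely a Strang-type splitting: a half-step of the exact flow of the (trivial) stochastic subsystem $\mathrm dX = \begin{pmatrix}\Sigma\\0\end{pmatrix}\,\mathrm dW$, followed by a full step of a deterministic one-step method for the Poisson subsystem $\dot X = B(X)\nabla H(X)$, followed by another half-step of the stochastic flow. First I would verify that the hypotheses of \cite[Proposition~6.1]{MR3246903} are met: the drift $f(x)=B(x)\nabla H(x)$ and the constant diffusion coefficient are globally Lipschitz by assumption, the required smoothness ($H\in C^7$, $B\in C^6$, with polynomially growing derivatives) guarantees that the exact solution has bounded moments of all orders and that the associated Kolmogorov backward equation has a solution $u(t,x)=\E[\Phi(X(t))\mid X(0)=x]$ lying in $C^{0,6}$ with derivatives of polynomial growth, and Lemma~\ref{lbm} supplies the uniform-in-$n$ bounded moments of the numerical flow.

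The core of the argument is a local weak error estimate of the form $|\E[\Phi(X(h))\mid X_0=x] - \E[\Phi(X_1)\mid X_0=x]| \le C(1+\|x\|^q)h^3$. I would obtain this by Taylor-expanding both $\E[\Phi(X(h))]$ and $\E[\Phi(X_1)]$ in powers of $h$ and matching coefficients up to and including order $h^2$. For the exact solution this is the standard It\^o--Taylor expansion of $u$. For the numerical solution, one expands the deterministic middle stage (which, being a one-step method consistent with $\dot X = f(X)$, agrees with the exact deterministic flow up to $\bigo{h^3}$ at least — the averaged-vector-field scheme of \cite{ch11} is of order two, which is exactly what is needed) and composes with the two Gaussian increments $\begin{pmatrix}\Sigma\\0\end{pmatrix}\big(W(t_n+h/2)-W(t_n)\big)$ and $\begin{pmatrix}\Sigma\\0\end{pmatrix}\big(W(t_{n+1})-W(t_n+h/2)\big)$, each of variance $h/2$. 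Taking expectations kills all odd moments of these Gaussians, and the crucial point is that the symmetric Strang structure makes the order-$h^{3/2}$ and order-$h^{5/2}$ contributions vanish, leaving agreement through $\bigo{h^2}$ and hence a local error $\bigo{h^3}$; I would also note that the additive (rather than multiplicative) nature of the noise considerably simplifies these expansions since the diffusion coefficient is constant. The computations here largely mirror those already sketched in the proof of Theorem~\ref{th-ms}, only carried one order further and after composition with $\Phi$ and expectation.

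Once the local error bound is established, the global estimate follows by the usual Lady Windermere's fan / telescoping argument: write $\E[\Phi(X(t_n))] - \E[\Phi(X_n)]$ as a sum over $k$ of terms $\E[u(t_n-t_{k+1}, X(t_{k+1};t_k,X_k))] - \E[u(t_n-t_{k+1}, X_{k+1})]$, apply the local weak error at each step (using the regularity of $u$ to absorb the test function), and control the accumulated $(1+\|X_k\|^q)$ factors by the uniform moment bounds of Lemma~\ref{lbm}. Summing $n = t_n/h \le T/h$ local errors of size $\bigo{h^3}$ yields the global bound $Ch^2$ with $C$ independent of $n$ and $h$. The main obstacle, and the only step requiring genuine care, is the bookkeeping in the local weak error expansion: one must be sure that every term of order $h^{3/2}$, $h^2$, and $h^{5/2}$ in $\E[\Phi(X_1)]$ matches the corresponding term in the It\^o--Taylor expansion of $\E[\Phi(X(h))]$, which is where the symmetry of the splitting and the constancy of $\begin{pmatrix}\Sigma\\0\end{pmatrix}$ must be invoked; since all of this is subsumed by \cite[Proposition~6.1]{MR3246903}, the cleanest route is to check its hypotheses and cite it, which is the approach I would take.
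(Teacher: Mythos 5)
Your proposal matches the paper's approach: the authors prove Theorem~\ref{th-we} precisely by observing that \eqref{dp} is a Strang splitting (half-step noise, full deterministic energy-preserving step, half-step noise) and invoking \cite[Proposition~6.1]{MR3246903}, which is exactly the route you settle on. Your additional sketch of the local weak error expansion and the telescoping argument is consistent with the machinery underlying that cited result, so the proposal is correct.
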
 

\section{Numerical experiments}\label{sec:numerics}
In this section, we illustrate numerically the above analysis of the 
proposed drift-preserving scheme \eqref{dp}, denoted by \textsc{DP} below. 
Furthermore, we compare it with the well known integrators, 
in particular the Euler--Maruyama scheme (denoted by \textsc{EM}) 
and the backward Euler--Maruyama scheme (denoted by \textsc{BEM}). 
The first and second Hamiltonian test problems considered (linear
stochastic oscillator and stochastic mathematical pendulum) use parameter values 
similar to those in \cite{ccal19}. The third test problem is a stochastic rigid body model 
which is a Poisson system perturbed by white noise, but not a Hamiltonian system. 
For nonlinear problems, we use fixed-point iterations for the implementation of the schemes, but one could 
use Newton iterations as well, see Remark~\ref{rem-implem}. 

\subsection{The linear stochastic oscillator} \label{sec:linear}
The linear stochastic oscillator has extensively been used as a test model since the seminal work \cite{smh04}.  
We thus first consider the SDE \eqref{prob} with $B(X)=J$ the constant $2\times2$ Poisson matrix and 
the following Hamiltonian
$$
H(p,q)=\frac12p^2+\frac12q^2.
$$
Furthermore, the initial values are $(p_0,q_0)=(0,1)$ and we consider a one dimensional noise with parameter $\Sigma=1$.  

For this problem, the integral present in the drift-preserving scheme~\eqref{dp} 
can be computed exactly, resulting in an explicit time integrator: 
\begin{equation*}
\begin{split}
Y_1&:=X_n+\begin{pmatrix}\left(W(t_n+\frac{h}2)-W(t_n)\right)\\0\end{pmatrix},\\
Y_2&:=\frac{1}{1+\frac{h^2}4}\begin{pmatrix}1-\frac{h^2}4 & -h\\ h & 1-\frac{h^2}4 \end{pmatrix}Y_1,\\
X_{n+1}&=Y_2+\begin{pmatrix}\left(W(t_{n+1})-W(t_n+\frac{h}2)\right)\\0\end{pmatrix}.
\end{split}
\end{equation*}
This numerical scheme is different from the one proposed in \cite{ccal19}.

In Figure \ref{fig:traceSO}, we compute the expected values of the energy $H(p,q)$ for various numerical integrators. 
This is done using the step sizes $h=5/2^4$, resp. $h=100/2^8$, and the time intervals $[0,5]$, resp. $[0,100]$. 
For the numerical discretization of the linear stochastic oscillator, we choose 
the (backward) Euler--Maruyama schemes (\textsc{EM} and \textsc{BEM}), 
the drift-preserving scheme (\textsc{DP}), 
and also the stochastic trigonometric method from \cite{c12} (\textsc{STM}). 
For the considered problem, the stochastic trigonometric method (\textsc{STM}) also has an exact trace formula for the energy, 
see \cite{c12} for details. We approximate the values of the expected energies using 
averages over $M=10^6$ samples. 
Similarly to the stochastic trigonometric method (\textsc{STM}) from \cite{c12}, one can observe the perfect long time behavior of the drift-preserving scheme with exact averaged energy drift along time, as stated 
in Theorem~\ref{thmTrace}. In contrast, the left picture of Figure \ref{fig:traceSO} illustrates that the expected energy of the classical Euler--Maruyama scheme drifts exponentially with time, while  the backward Euler--Maruyama scheme exhibits an inaccurately slow growth rate, as emphasized in \cite{smh04}. 

\begin{figure}[tb]
\centering
\includegraphics*[height=5cm,keepaspectratio]{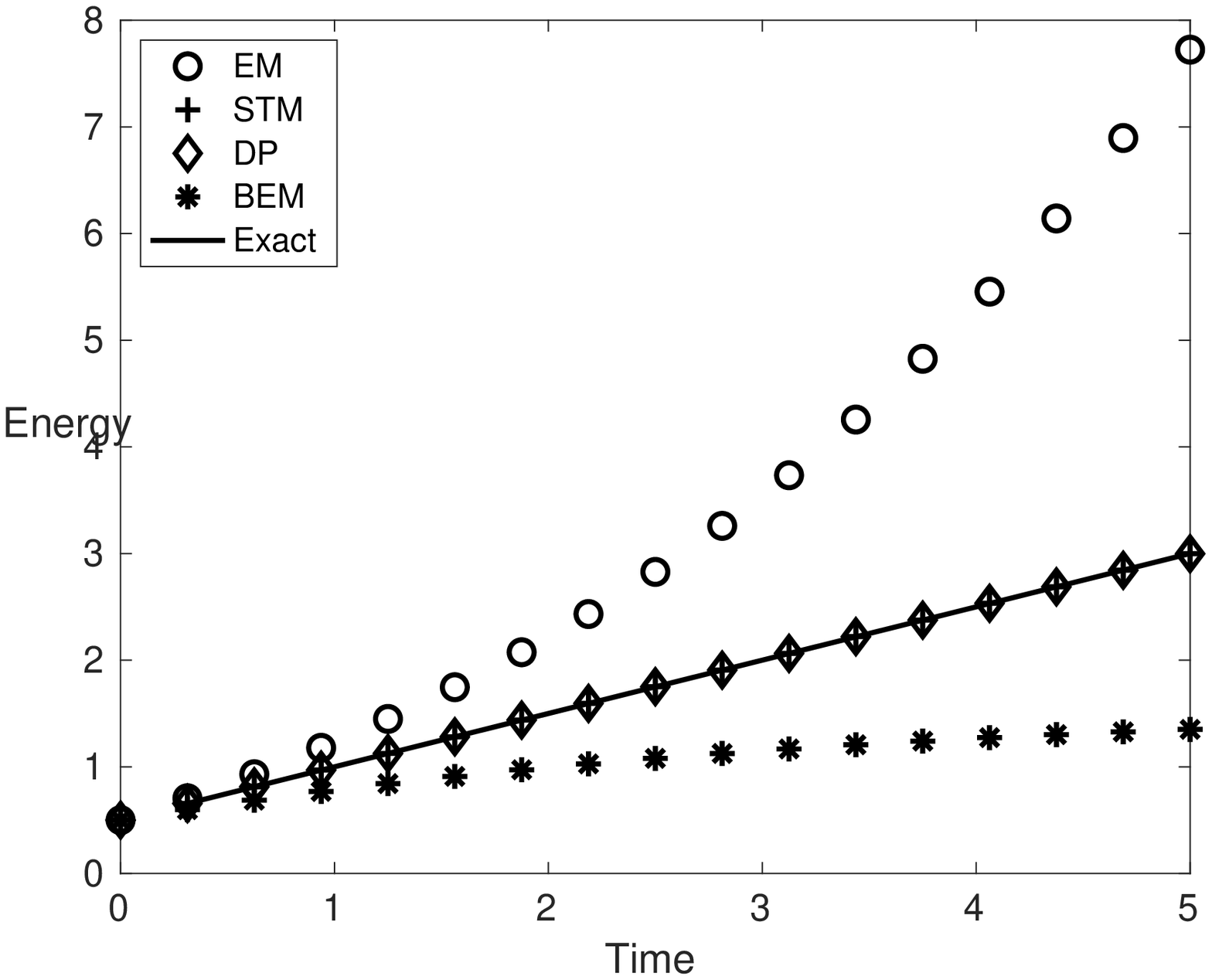}
\qquad
\includegraphics*[height=5cm,keepaspectratio]{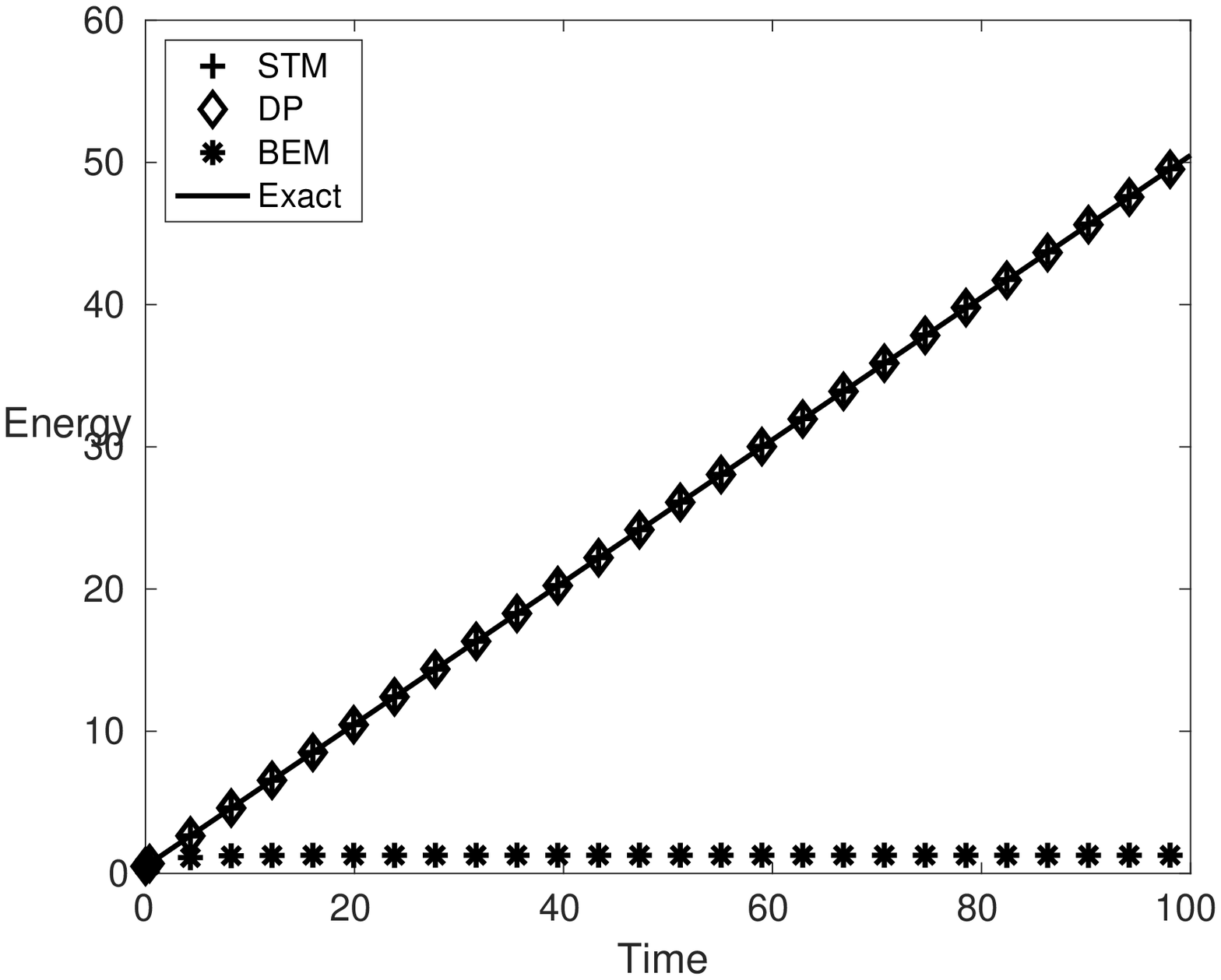}
\caption{Linear stochastic oscillator: numerical trace formulas for 
$\IE[H(p(t),q(t))]$ on the interval $[0,5]$ (left) and $[0,100]$ (right).
Comparison of the Euler--Maruyama scheme (EM), the stochastic trigonometric method (STM), the drift-preserving scheme (DP), the backward Euler--Maruyama scheme (BEM), and the exact solution.}
\label{fig:traceSO}
\end{figure}

In Figure~\ref{fig:msSO}, we illustrate numerically the strong rate of convergence of the drift-preserving scheme \eqref{dp} and compare with the other schemes. 
To this aim, we discretize the linear stochastic oscillator on the time interval $[0,1]$ 
using step sizes ranging from $h=2^{-6}$ to $h=2^{-10}$ and we use as a reference solution the stochastic trigonometric method with small time step $h_{\text{ref}}=2^{-12}$. The expected values are approximated by computing averages over $M=10^6$ samples. One can observe the mean-square order $1$ of convergence of the drift-preserving scheme \eqref{dp} with lines of slope 1 in Figure \ref{fig:msSO}, which corroborates Theorem~\ref{th-ms}.  

\begin{figure}[h]
\centering
\includegraphics*[height=5cm,keepaspectratio]{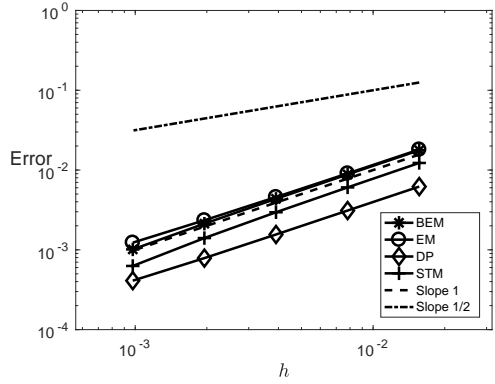}
\caption{Linear stochastic oscillator: mean-square convergence rates for the backward Euler--Maruyama scheme (BEM), the Euler--Maruyama scheme (EM), the drift-preserving scheme (DP), 
and the stochastic trigonometric method (STM). Reference lines of slopes 1, resp. 1/2.}
\label{fig:msSO}
\end{figure}

Next, Figure~\ref{fig:weakSO} illustrates the weak convergence rate of the drift-preserving scheme \eqref{dp}. For simplicity, we only display the errors in the first and second moments 
since explicit formulas are available for these quantities. 
We take the noise scaling parameter $\Sigma=0.1$ and step sizes ranging from $h=2^{-4}$ to $h=2^{-16}$. 
The remaining parameters are the same as in the previous numerical experiment.
The lines of slope $2$ in  Figure~\ref{fig:weakSO} illustrates that the drift-preserving scheme has a weak order of convergence $2$ 
in the first and second moments, as stated in Theorem~\ref{th-we}. 

\begin{figure}[tb]
\centering
\begin{subfigure}[b]{1\textwidth}
\centering
   \includegraphics*[height=4.0cm,keepaspectratio]{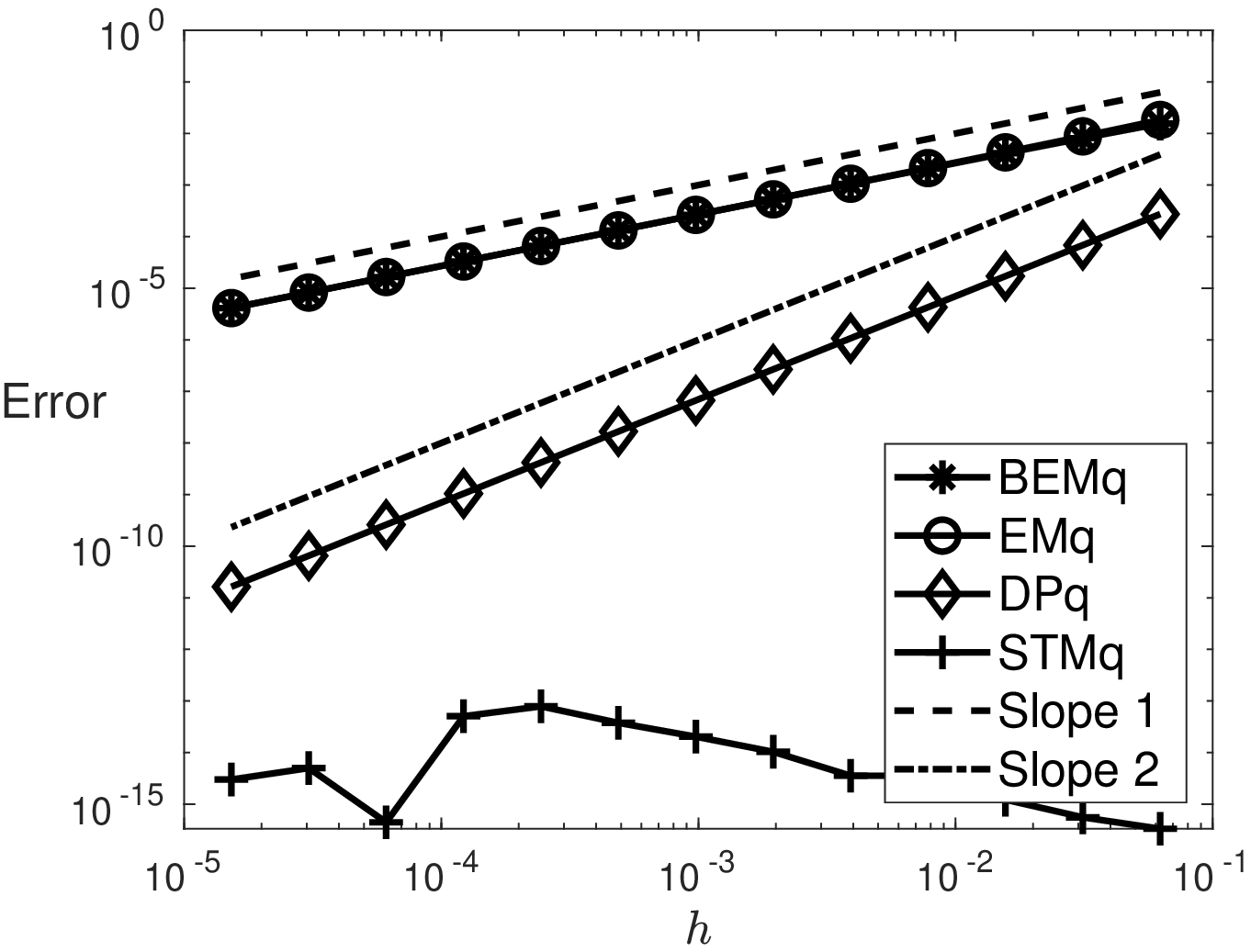}
	 \quad
   \includegraphics*[height=4.0cm,keepaspectratio]{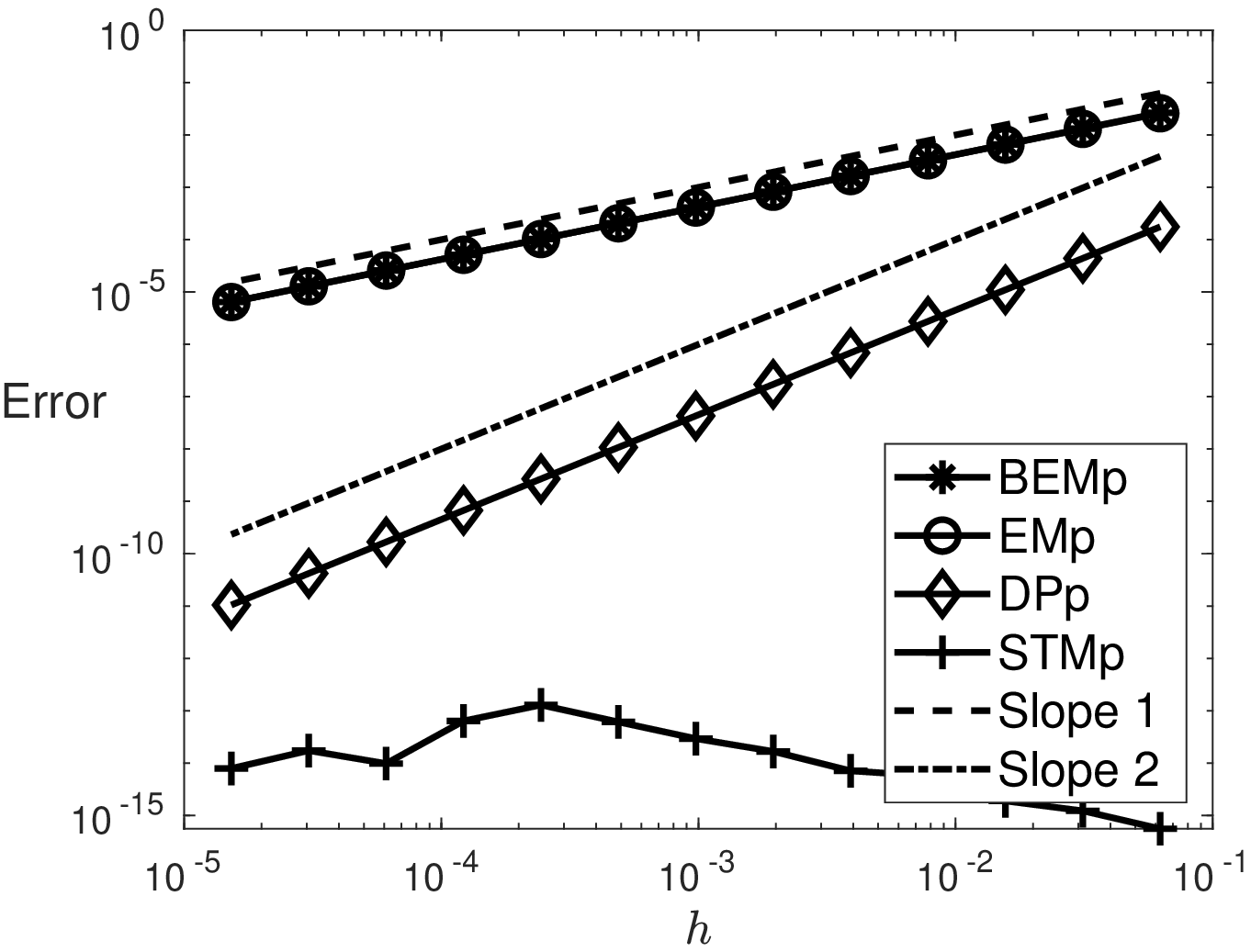}
   \caption{Errors in the first moments $\IE[q(t)]$ (left) and $\IE[p(t)]$ (right).}
   \label{fig:w1} 
\end{subfigure}
\begin{subfigure}[b]{1\textwidth}
\centering
   \includegraphics*[height=4.0cm,keepaspectratio]{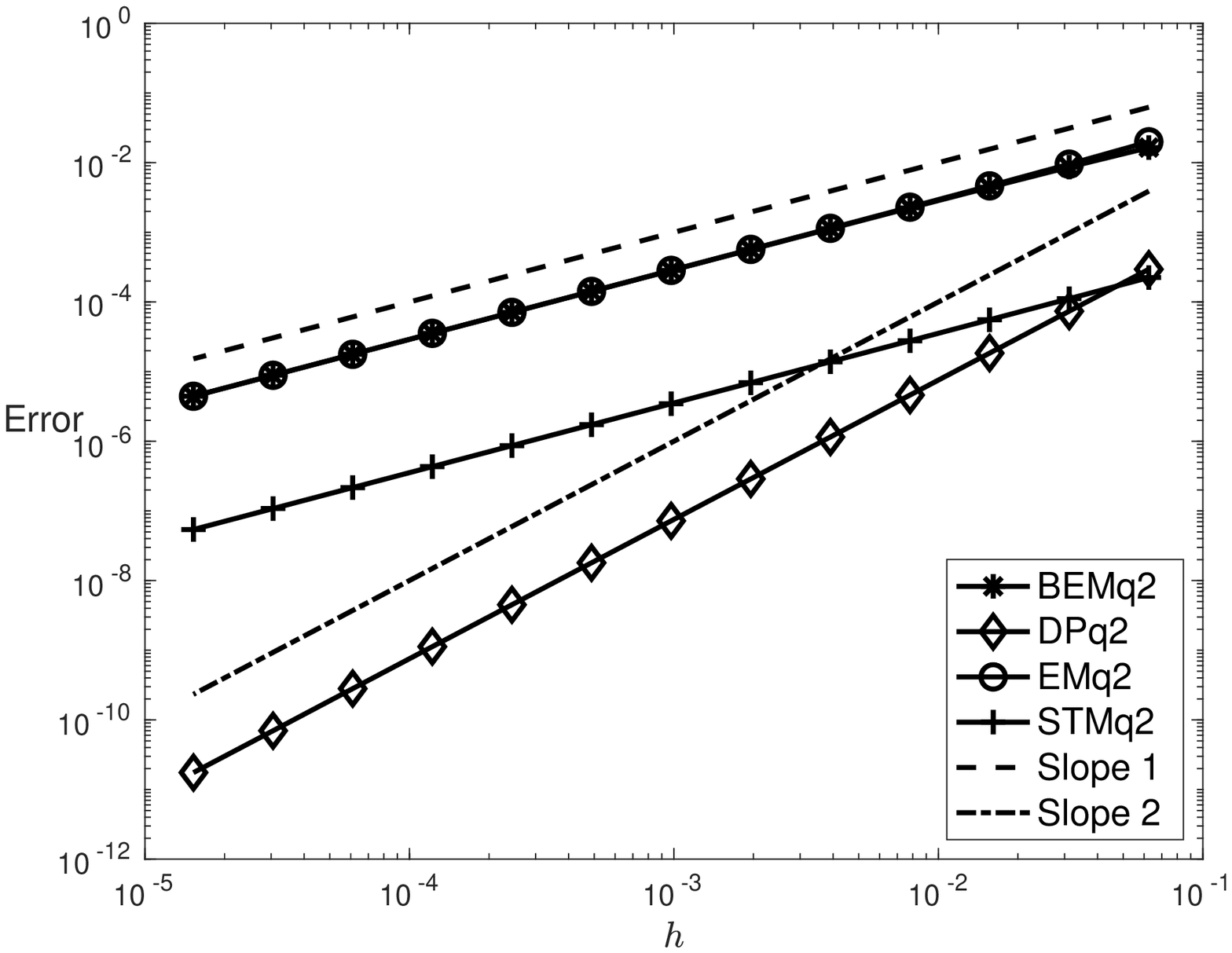}
  \quad 
	\includegraphics*[height=4.0cm,keepaspectratio]{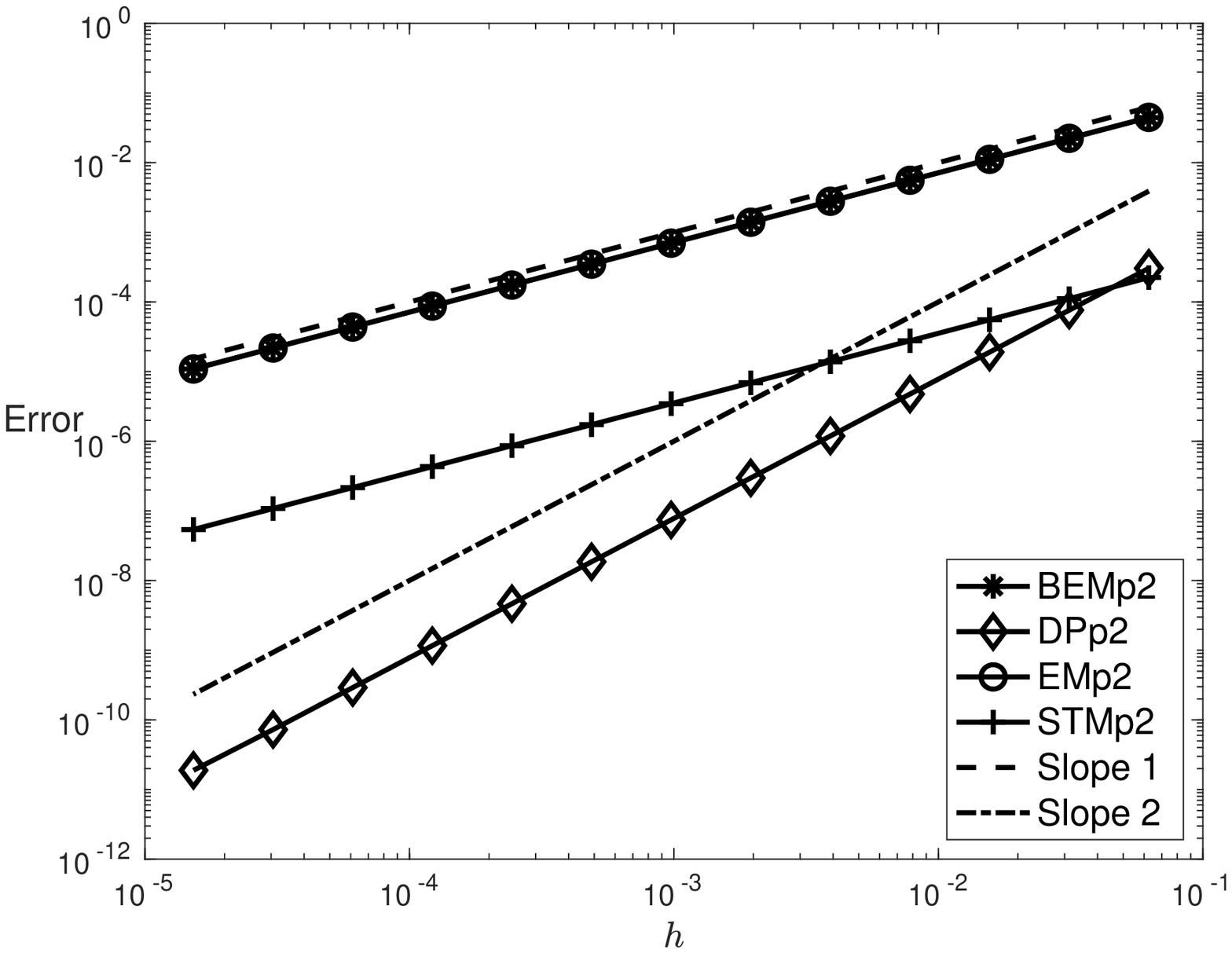}
   \caption{Errors in the second moments $\IE[q(t)^2]$ (left) and $\IE[p(t)^2]$ (right).}
   \label{fig:w2} 
\end{subfigure}
\caption{Linear stochastic oscillator: weak convergence rates 
for the backward Euler--Maruyama scheme (BEM), the Euler--Maruyama scheme (EM), the drift-preserving scheme (DP), 
and the stochastic trigonometric method (STM). Reference lines of slopes 1, resp. 2.}
\label{fig:weakSO}
\end{figure}

As symplectic integrators for deterministic Hamiltonian systems have proven to be very successful \cite{haluwa}, 
it may be tempting to use them in a splitting scheme for the SDE \eqref{prob}. 
To study this, in Figure~\ref{fig:traceOscVS}, we compare the behavior, with respect to the trace formula, 
of the drift-preserving scheme and of the symplectic splitting strategies discussed in Section~\ref{sec-bea}. We use 
the classical Euler symplectic and St\"ormer--Verlet schemes for the deterministic Hamiltonian 
and integrate the noisy part exactly. These numerical integrators are denoted by SYMP, resp. ST below. 
As a comparison with non-geometric numerical integrators, we also use the classical Euler and Heun's schemes 
in place of a symplectic scheme. These numerical integrators are denoted by splitEULER and splitHEUN. 
We discretize the linear stochastic oscillator on the time interval $[0,100]$ with $2^7$ step sizes. It can be observed that the splitting method using the non-symplectic schemes splitEULER or splitHEUN
behaves as poorly as standard explicit schemes for SDEs: we hence display in Figure~\ref{fig:traceOscVS} only part of
their numerical values due to their exponential growth. Although not having the exact growth rates, 
the two symplectic splitting schemes behave much better than the classical Euler--Maruyama scheme 
with a linear drift in the averaged energy with a perturbed rate, as predicted by Proposition~\ref{proposition:bea}.

\begin{figure}[tb]
\centering
\includegraphics*[height=5cm,keepaspectratio]{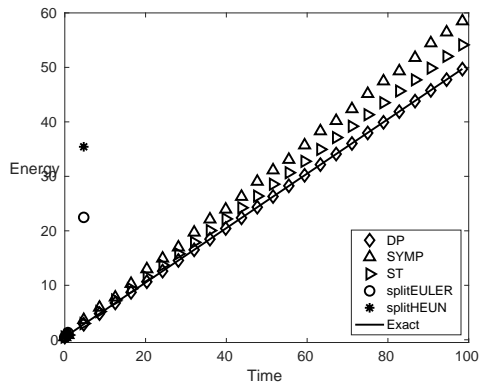}
\caption{Linear stochastic oscillator: numerical trace formulas for $\IE[H(p(t),q(t))]$ on the interval $[0,100]$.
Comparison of the drift-preserving scheme (DP), the splitting methods with, respectively, the symplectic Euler method (SYMP), the  
St\"ormer–Verlet method
(ST), the explicit Euler method (splitEULER), the Heun method (splitHEUN), and the exact solution.
}
\label{fig:traceOscVS}
\end{figure}

\subsection{The stochastic mathematical pendulum}
Let us next consider the nonlinear SDE~\eqref{prob} 
(with $B(X)=J$ the constant canonical Poisson matrix) with the Hamiltonian
$$
H(p,q)=\frac12p^2-\cos(q)
$$
and a noise in dimension one with parameter $\Sigma=1$. We take the initial values $(p_0,q_0)=(1,\sqrt{2})$. 

We again compare the behavior, with respect to the trace formula, 
of the DP, SYMP, ST and splitEULER schemes. 
To do this, we integrate numerically the stochastic mathematical pendulum on the time interval $[0,100]$ with $2^7$ step sizes.
The results are presented in Figure~\ref{fig:tracePendVS}. 
Again, we recover the fact that the drift-preserving scheme exhibits 
the exact averaged energy drift, as predicted in Theorem~\ref{thmTrace}. 
Furthermore, one can still observe a good behavior of the symplectic strategies 
from Section~\ref{sec-bea} analogously to the linear case in Section~\ref{sec:linear}, 
although the analysis in Proposition~\ref{proposition:bea} is only valid for the linear case. 

\begin{figure}[h]
\centering
\includegraphics*[height=5cm,keepaspectratio]{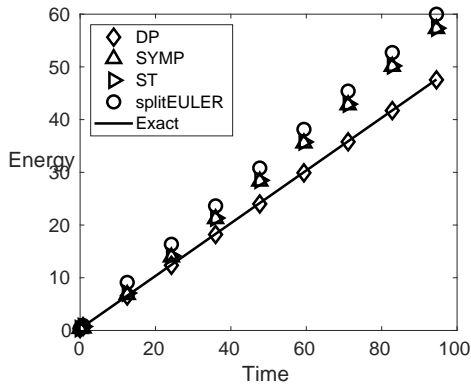}
\caption{Stochastic mathematical pendulum: numerical trace formulas for $\IE[H(p(t),q(t))]$ on the interval $[0,100]$.
Comparison of the drift-preserving scheme (DP), the splitting methods with, respectively, the symplectic Euler method (SYMP), the  
St\"ormer–Verlet method
(ST), the explicit Euler method (splitEULER), and the exact solution.}
\label{fig:tracePendVS}
\end{figure}

\subsection{Stochastic rigid body problem} \label{sec:RB}
We now consider an It\^o version of the stochastic rigid body problem 
studied in \cite{MR1432623,acvz,cd14} for instance. 
This system has the following Hamiltonian
$$
H(X)=\frac12\left(X_1^2/I_1+X_2^2/I_2+X_3^2/I_3\right), 
$$
the quadratic Casimir
$$
C(X)=\frac12\left(X_1^2+X_2^2+X_3^2\right), 
$$
and the skew-symmetric matrix 
$$
B(X)=\begin{pmatrix} 0 & -X_3 & X_2 \\ X_3 & 0 & -X_1 \\ -X_2 & X_1 &0\end{pmatrix}.
$$
Here, we denote the angular momentum by $X=(X_1,X_2,X_3)^\top$ and take the moments of inertia to be 
$I=(I_1,I_2,I_3)=(0.345, 0.653, 1)$. The initial value for the SDE \eqref{prob}
is given by $X(0)=(0.8, 0.6, 0)$ and we consider a scalar noise $W(t)$ with $\Sigma=0.25$ 
(acting on the first component $X_1$ only). 

Observe that, even if the Hamiltonian has not the desired structure \eqref{sepHam}, 
one still has a linear drift in the energy since the Hamiltonian is quadratic and 
thus the Hessian matrix present in the derivation of the trace formula 
has the correct structure as noted in Remark~\ref{rem1}.

In Figure~\ref{fig:traceRB}, we compute the expected values of the energy $H(X)$ and the Casimir $C(X)$ 
using $N=2^5$ step sizes on the time interval $[0,4]$ (in order to 
see also the behavior of the Euler--Maruyama scheme) along various numerical solutions. 
The expected values are approximated by computing averages over $M=10^6$ samples.
The exact long time behavior with respect to the energy and the Casimir averaged growths 
of the drift-preserving scheme can be observed in Figure~\ref{fig:traceRB}, which corroborates Theorem~\ref{thmTrace} and Proposition~\ref{propCasimir}. 
As in the previous numerical experiment, one can also see that the growth rates 
of the Euler--Maruyama schemes are in contrast qualitatively 
wrong.

\begin{figure}[tb]
\centering
\includegraphics*[height=5cm,keepaspectratio]{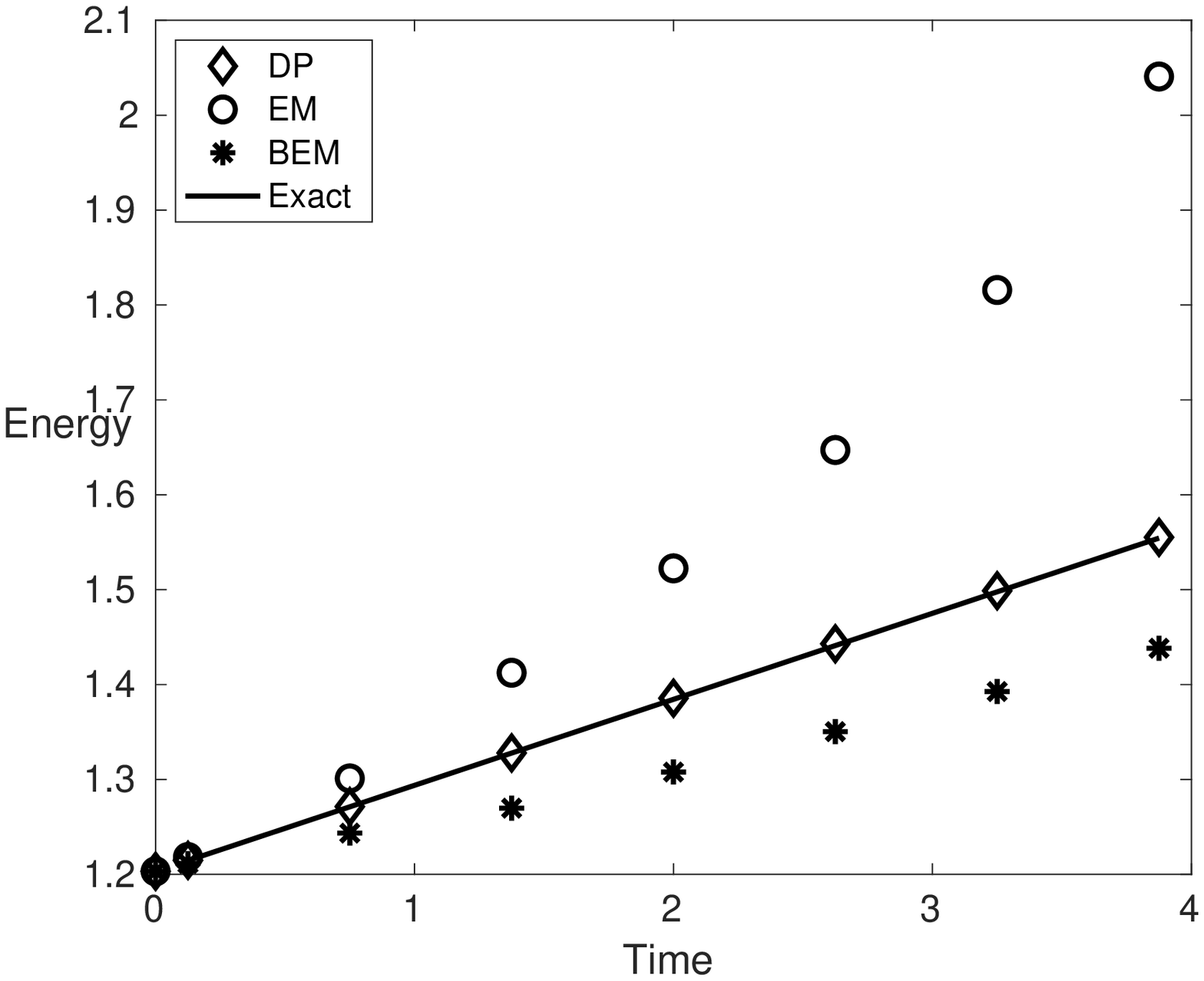}
\qquad \includegraphics*[height=5cm,keepaspectratio]{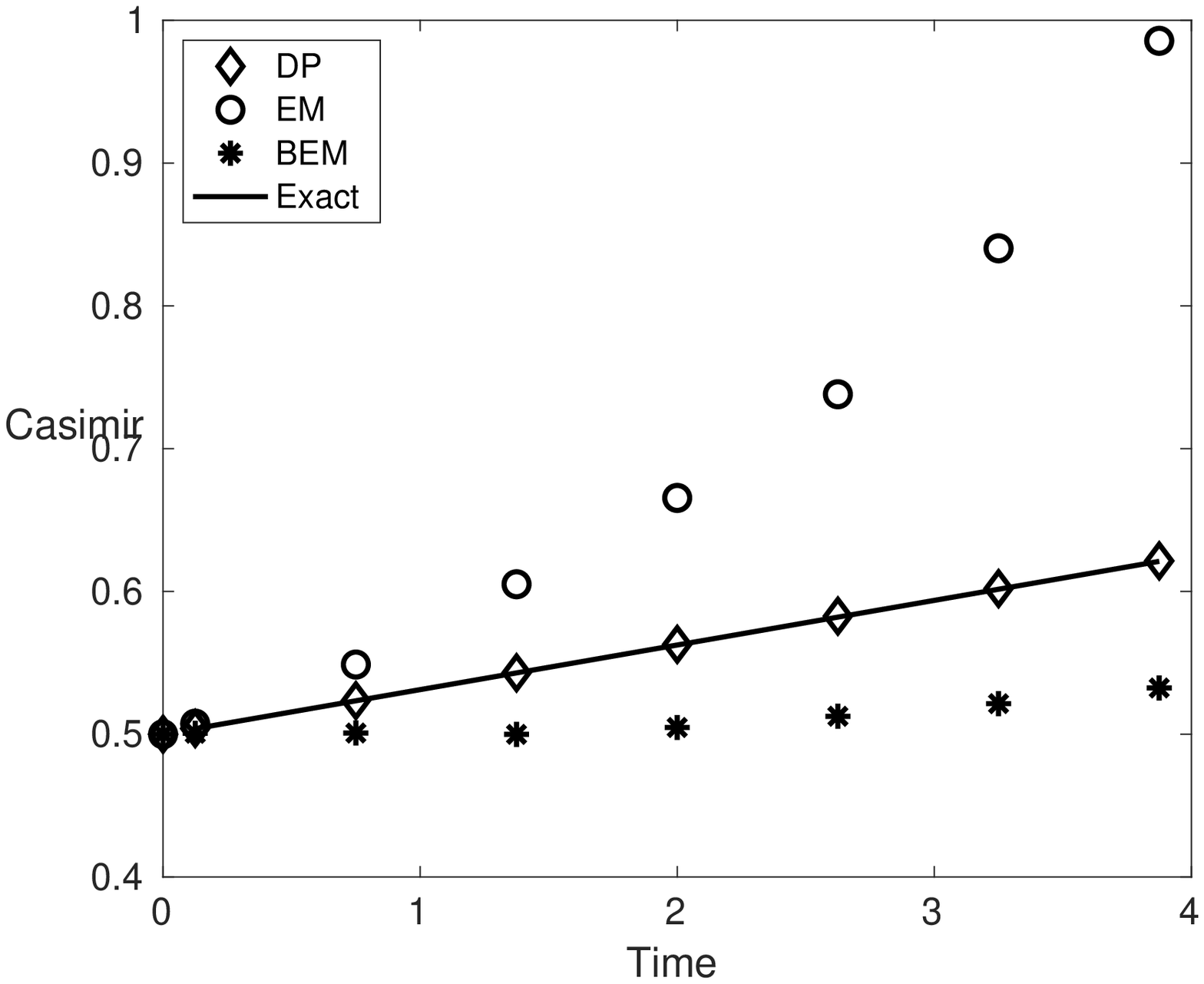}
\caption{Stochastic rigid body problem: numerical trace formulas 
for the energy $\IE[H(X(t))]$ (left) and for the Casimir $\IE[C(X(t))]$ (right) for the drift-preserving scheme (DP), 
the Euler--Maruyama scheme (EM), the backward Euler--Maruyama scheme (BEM), and the exact solution.}
\label{fig:traceRB}
\end{figure}

Similarly to the previous example, we numerically illustrate in Figure~\ref{fig:msRB} the strong convergence rate of the 
drift-preservation scheme \eqref{dp} for the stochastic rigid body problem. 
To this aim, we discretize the problem on the time interval $[0,0.75]$ 
using step sizes ranging from $h=2^{-6}$ to $h=2^{-10}$ and compare with a reference solution obtained with scheme \eqref{dp} with $h_{\text{ref}}=2^{-12}$. We compute averages over $M=10^5$ samples to approximate the expected values present in the mean-square errors. One observes again mean-square convergence of order $1$ for the drift-preserving scheme. 

\begin{figure}[tb]
\centering
\includegraphics*[height=5cm,keepaspectratio]{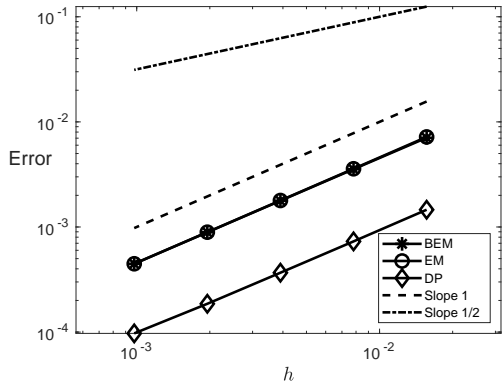}
\caption{Stochastic rigid body problem: mean-square convergence rates
 for the backward Euler--Maruyama scheme (BEM), 
the drift-preserving scheme (DP), 
and the Euler--Maruyama scheme (EM).
Reference lines of slopes 1, resp. 1/2.}
\label{fig:msRB}
\end{figure}

Next, Figure~\ref{fig:weakRB} illustrates the weak convergence rate of the drift-preserving scheme \eqref{dp}. We plot the weak errors in the first and second moments of the first component 
of the solutions using the parameters: $\Sigma=0.1$, $T=1$, $M=2500$ samples, 
and step sizes ranging from $h=2^{-10}$ to $h=2^{-20}$. The rest of the parameters are as in the previous numerical experiment. 
One can observe weak order $2$ in the first and second moments for the drift-preserving scheme (up to Monte-Carlo errors), which confirms again the statement of Theorem~\ref{th-we}.

\begin{figure}[tb]
\centering
\includegraphics*[height=4.6cm,keepaspectratio]{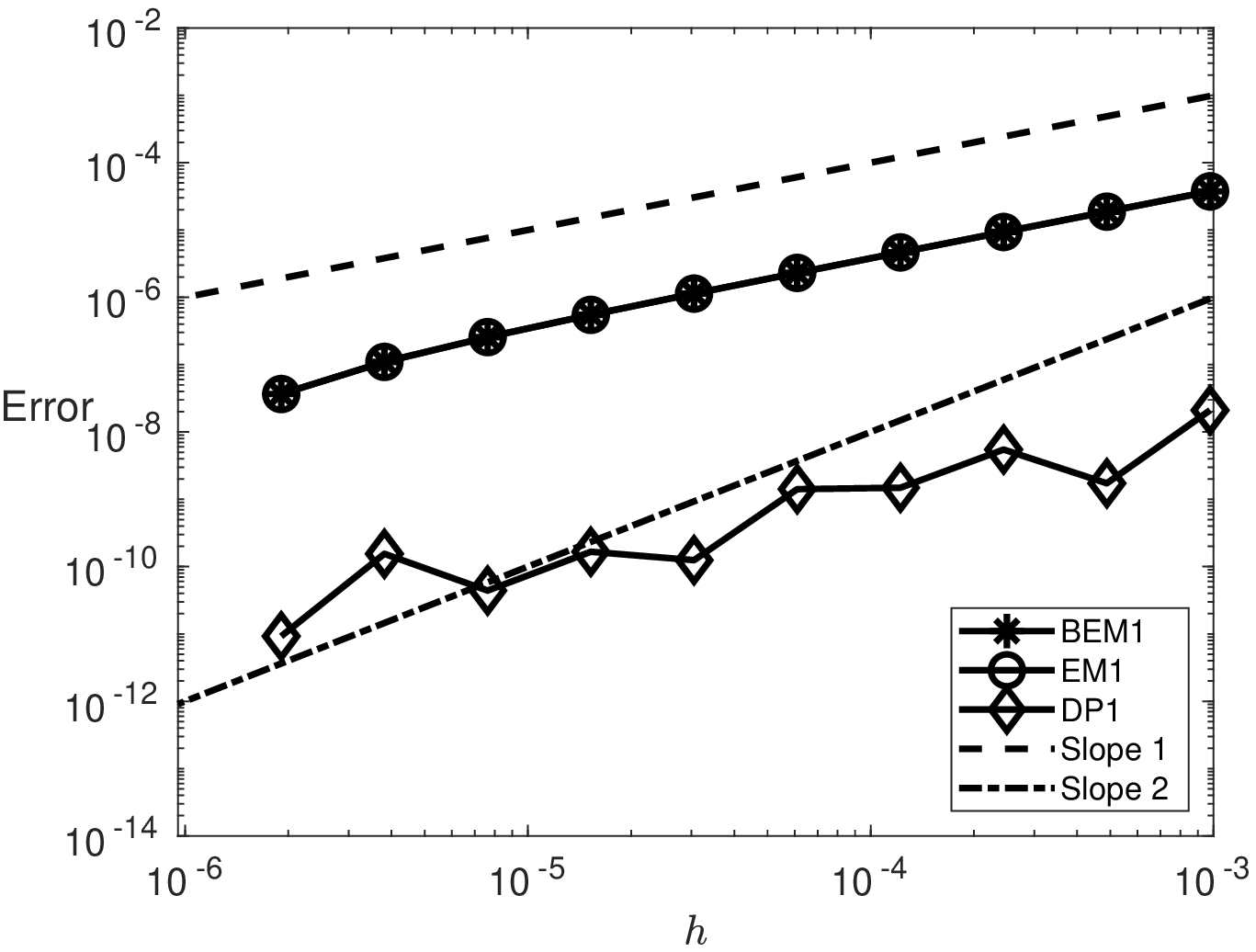}
\quad
\includegraphics*[height=4.5cm,keepaspectratio]{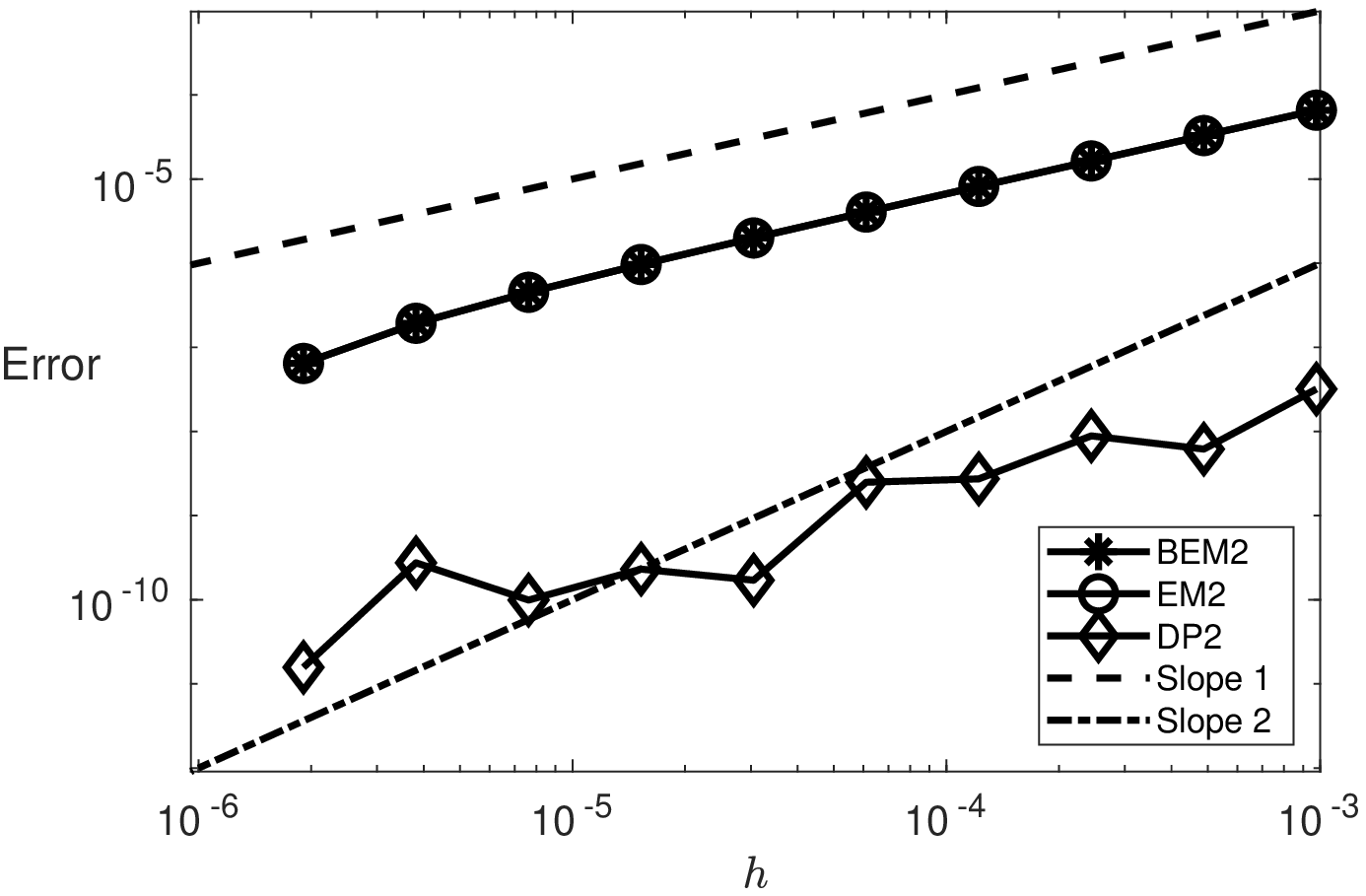}
\caption{Stochastic rigid body problem: weak convergence rates in the first moment $\IE[X_1(t_n)]$ (left) and second moment $\IE[X_1(t_n)^2]$ (right) for the drift-preserving scheme (DP), 
the Euler--Maruyama scheme (EM), and the backward Euler--Maruyama scheme (BEM). Reference lines of slopes 1, resp. 2.}
\label{fig:weakRB}
\end{figure}

Finally, in Figure~\ref{fig:revtraceRB}, we take the same parameters as in the first experiment of this subsection but we consider a noise in dimension two with the matrix 
$$
\Sigma=
\begin{pmatrix}
0.25 & 0\\ 0 &0.25
\end{pmatrix}.
$$
We then compute the expected values of the energy $H(X)$ and the Casimir $C(X)$ 
using $N=2^6$ step sizes along various numerical solutions and 
display the trace formula for the energy
$$
\E\left[H(X(t))\right]=\E\left[H(X_0)\right]+\frac12\Tr\left(\Sigma^\top\begin{pmatrix}1/I_1 & 0\\ 0 & 1/I_2\end{pmatrix}\Sigma\right)t 
$$
and the trace formula for the Casimir
$$
\E\left[C(X(t))\right]=\E\left[C(X_0)\right]+\frac{1}2\Tr\left(\Sigma^\top \Sigma\right)t.
$$
\begin{figure}[tb]
\centering
\includegraphics*[height=5cm,keepaspectratio]{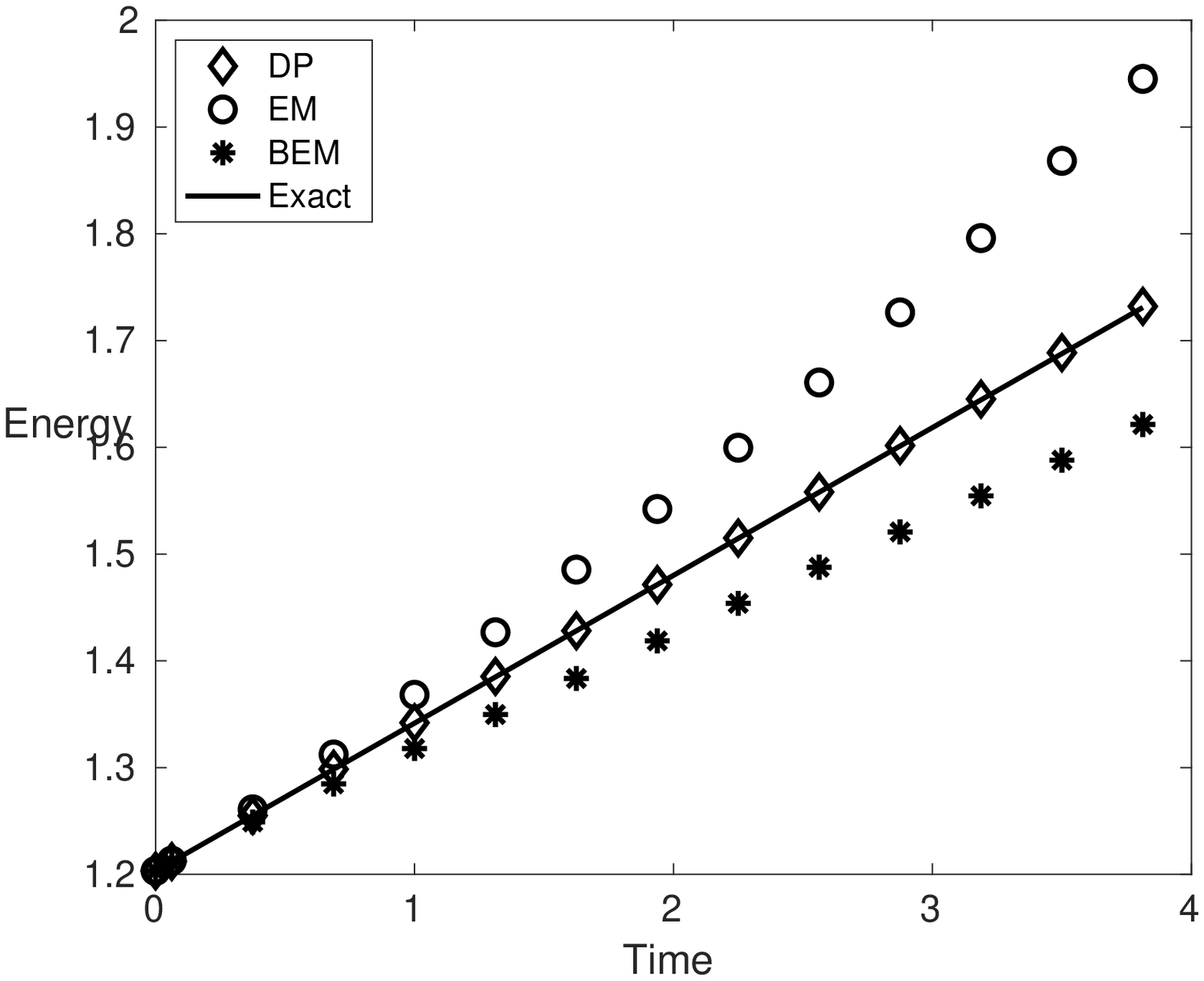}
\qquad \includegraphics*[height=5cm,keepaspectratio]{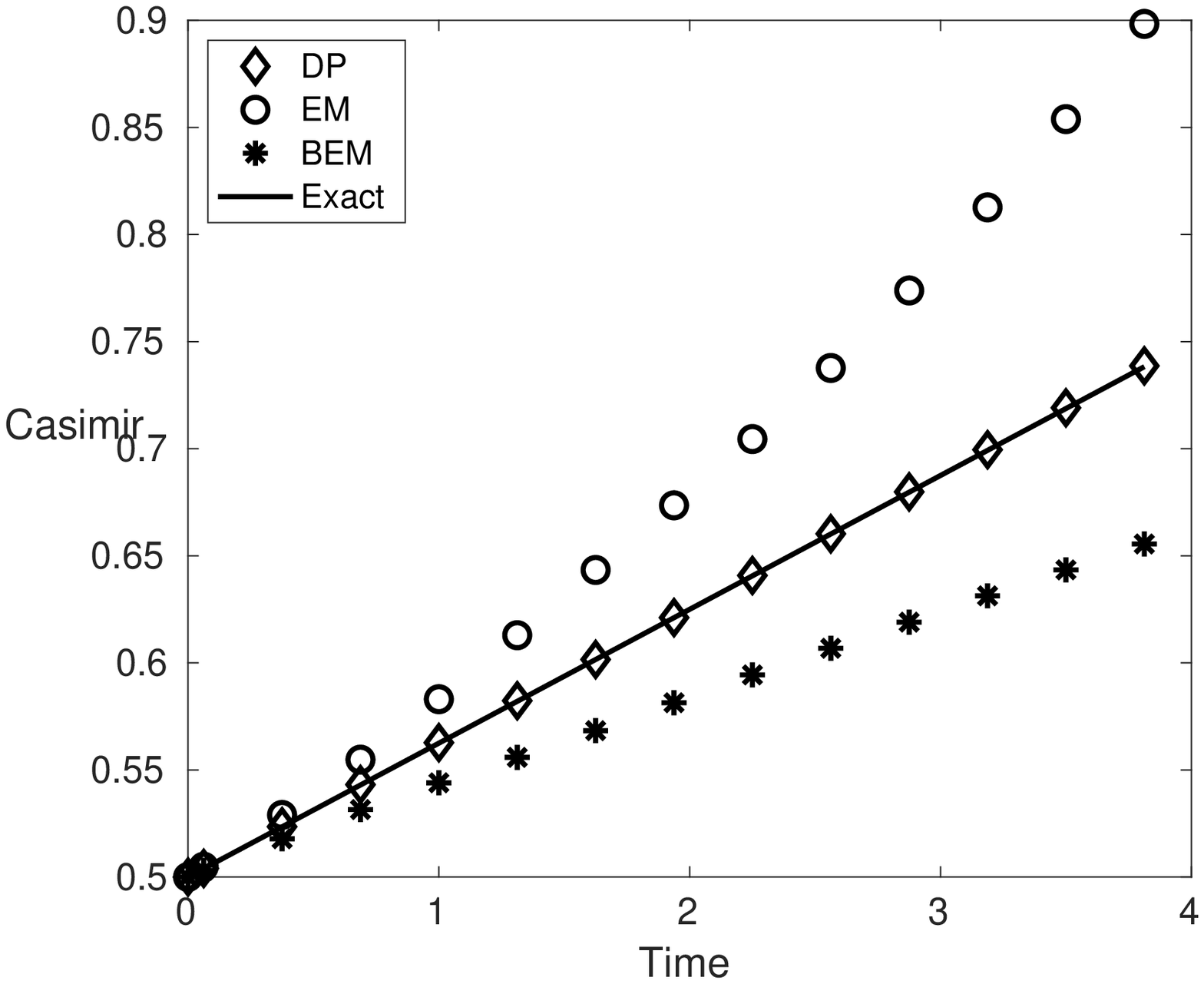}
\caption{Stochastic rigid body problem with two dimensional noise: numerical trace formulas for the energy $\IE[H(X(t))]$ (left) and for the Casimir $\IE[C(X(t))]$ (right) for the Casimir $\IE[C(X(t))]$ (right) for the drift-preserving scheme (DP), 
the Euler--Maruyama scheme (EM), the backward Euler--Maruyama scheme (BEM), and the exact solution.}
\label{fig:revtraceRB}
\end{figure}
Again, one can observe in Figure~\ref{fig:revtraceRB} the excellent behavior of the drift-preserving scheme as stated 
in Theorem~\ref{thmTrace} and Proposition~\ref{propCasimir}. 

\section{Acknowledgements}
We appreciate the referees' comments on an earlier version of the paper. 
The work of DC was supported by the Swedish Research Council (VR) (projects nr. $2018-04443$). 
The work of GV was partially supported by the Swiss National Science Foundation, grants No. 200020\_184614, No. 200021\_162404 and No. 200020\_178752. The computations were performed on resources provided by the Swedish National Infrastructure for Computing (SNIC) at HPC2N, Ume{\aa} University and UPPMAX, Uppsala University. 

\bibliographystyle{plain}
\bibliography{biblio}

\begin{thebibliography}{10}

\bibitem{acvz}
A.~Abdulle, D.~Cohen, G.~Vilmart, and K.C. Zygalakis.
\newblock High order weak methods for stochastic differential equations based
  on modified equations.
\newblock {\em SIAM J. Sci. Comput.}, 34(3):A1800--A1823, 2012.

\bibitem{MR3927434}
A.~Alamo and J.~M. Sanz-Serna.
\newblock Word combinatorics for stochastic differential equations: splitting
  integrators.
\newblock {\em Commun. Pure Appl. Anal.}, 18(4):2163--2195, 2019.

\bibitem{ac18}
R.~Anton and D.~Cohen.
\newblock Exponential integrators for stochastic {S}chr\"{o}dinger equations
  driven by {I}t\^{o} noise.
\newblock {\em J. Comput. Math.}, 36(2):276--309, 2018.

\bibitem{aclw16}
R.~Anton, D.~Cohen, S.~Larsson, and X.~Wang.
\newblock Full discretization of semilinear stochastic wave equations driven by
  multiplicative noise.
\newblock {\em SIAM J. Numer. Anal.}, 54(2):1093--1119, 2016.

\bibitem{MR3642447}
S.~Blanes and F.~Casas.
\newblock {\em A concise introduction to geometric numerical integration}.
\newblock Monographs and Research Notes in Mathematics. CRC Press, Boca Raton,
  FL, 2016.

\bibitem{MR2926249}
L.~Brugnano, M.~Calvo, J.~I. Montijano, and L.~R\'{a}ndez.
\newblock Energy-preserving methods for {P}oisson systems.
\newblock {\em J. Comput. Appl. Math.}, 236(16):3890--3904, 2012.

\bibitem{bgi18}
L.~Brugnano, G.~Gurioli, and F.~Iavernaro.
\newblock Analysis of energy and quadratic invariant preserving ({EQUIP})
  methods.
\newblock {\em J. Comput. Appl. Math.}, 335:51--73, 2018.

\bibitem{MR2926250}
L.~Brugnano and F.~Iavernaro.
\newblock Line integral methods which preserve all invariants of conservative
  problems.
\newblock {\em J. Comput. Appl. Math.}, 236(16):3905--3919, 2012.

\bibitem{bb14}
P.~M. Burrage and K.~Burrage.
\newblock Structure-preserving {R}unge-{K}utta methods for stochastic
  {H}amiltonian equations with additive noise.
\newblock {\em Numer. Algorithms}, 65(3):519--532, 2014.

\bibitem{MR3993177}
E.~Celledoni, M.~Farr\'{e}~Puiggal\'{\i}, E.~H. H{\o}iseth, and
  D.~Mart\'{\i}n~de Diego.
\newblock Energy-preserving integrators applied to nonholonomic systems.
\newblock {\em J. Nonlinear Sci.}, 29(4):1523--1562, 2019.

\bibitem{MR2542869}
E.~Celledoni, R.~I. McLachlan, D.~I. McLaren, B.~Owren, G.~R.~W. Quispel, and
  W.~M. Wright.
\newblock Energy-preserving {R}unge-{K}utta methods.
\newblock {\em M2AN Math. Model. Numer. Anal.}, 43(4):645--649, 2009.

\bibitem{cos14}
E.~Celledoni, B.~Owren, and Y.~Sun.
\newblock The minimal stage, energy preserving {R}unge-{K}utta method for
  polynomial {H}amiltonian systems is the averaged vector field method.
\newblock {\em Math. Comp.}, 83(288):1689--1700, 2014.

\bibitem{ccal19}
C.~Chen, D.~Cohen, R.~D'Ambrosio, and A.~Lang.
\newblock Drift-preserving numerical integrators for stochastic {H}amiltonian
  systems.
\newblock {\em Adv Comput Math}, 46:1--22, 2020.

\bibitem{c12}
D.~Cohen.
\newblock On the numerical discretisation of stochastic oscillators.
\newblock {\em Math. Comput. Simulation}, 82(8):1478--1495, 2012.

\bibitem{cchs19}
D.~Cohen, J.~Cui, J.~Hong, and L.~Sun.
\newblock Exponential integrators for stochastic {M}axwell's equations driven
  by {I}t\^o noise.
\newblock {\em J. Comput. Phys.}, 410:109382, 2020.

\bibitem{cd14}
D.~Cohen and G.~Dujardin.
\newblock Energy-preserving integrators for stochastic {P}oisson systems.
\newblock {\em Commun. Math. Sci.}, 12(8):1523--1539, 2014.

\bibitem{ch11}
D.~Cohen and E.~Hairer.
\newblock Linear energy-preserving integrators for {P}oisson systems.
\newblock {\em BIT}, 51(1):91--101, 2011.

\bibitem{cls13}
D.~Cohen, S.~Larsson, and M.~Sigg.
\newblock A trigonometric method for the linear stochastic wave equation.
\newblock {\em SIAM J. Numer. Anal.}, 51(1):204--222, 2013.

\bibitem{cs12}
D.~Cohen and M.~Sigg.
\newblock Convergence analysis of trigonometric methods for stiff second-order
  stochastic differential equations.
\newblock {\em Numer. Math.}, 121(1):1--29, 2012.

\bibitem{chs19}
J.~Cui, J.~Hong, and D.~Sheng.
\newblock Convergence in density of splitting {AVF} scheme for stochastic
  {L}angevin equation.
\newblock {\em arXiv}, 2019.

\bibitem{cjz17}
H.~de~la Cruz, J.~C. Jimenez, and J.~P. Zubelli.
\newblock Locally linearized methods for the simulation of stochastic
  oscillators driven by random forces.
\newblock {\em BIT}, 57(1):123--151, 2017.

\bibitem{g96}
O.~Gonzalez.
\newblock Time integration and discrete {H}amiltonian systems.
\newblock {\em J. Nonlinear Sci.}, 6(5):449--467, 1996.

\bibitem{h10}
E.~Hairer.
\newblock Energy-preserving variant of collocation methods.
\newblock {\em JNAIAM. J. Numer. Anal. Ind. Appl. Math.}, 5(1-2):73--84, 2010.

\bibitem{haluwa}
E.~Hairer, C.~Lubich, and G.~Wanner.
\newblock {\em Geometric Numerical Integration: Structure-Preserving Algorithms
  for Ordinary Differential Equations. Second Edition}, volume~31 of {\em
  Springer Series in Computational Mathematics}.
\newblock Springer-Verlag, Berlin, Heidelberg, 2006.

\bibitem{MR2249159}
E.~Hairer, Ch. Lubich, and G.~Wanner.
\newblock Geometric numerical integration illustrated by the
  {S}t\"{o}rmer-{V}erlet method.
\newblock {\em Acta Numer.}, 12:399--450, 2003.

\bibitem{MR3873562}
M.~Han, Q.~Ma, and X.~Ding.
\newblock High-order stochastic symplectic partitioned {R}unge-{K}utta methods
  for stochastic {H}amiltonian systems with additive noise.
\newblock {\em Appl. Math. Comput.}, 346:575--593, 2019.

\bibitem{MR3882980}
D.~D. Holm and T.~M. Tyranowski.
\newblock Stochastic discrete {H}amiltonian variational integrators.
\newblock {\em BIT}, 58(4):1009--1048, 2018.

\bibitem{hsw06}
J.~Hong, R.~Scherer, and L.~Wang.
\newblock Midpoint rule for a linear stochastic oscillator with additive noise.
\newblock {\em Neural Parallel Sci. Comput.}, 14(1):1--12, 2006.

\bibitem{it09}
F.~Iavernaro and D.~Trigiante.
\newblock High-order symmetric schemes for the energy conservation of
  polynomial {H}amiltonian problems.
\newblock {\em JNAIAM J. Numer. Anal. Ind. Appl. Math.}, 4(1-2):87--101, 2009.

\bibitem{k16}
H.~Kojima.
\newblock Invariants preserving schemes based on explicit {R}unge-{K}utta
  methods.
\newblock {\em BIT}, 56(4):1317--1337, 2016.

\bibitem{MR3735294}
Y.~Komori, D.~Cohen, and K.~Burrage.
\newblock Weak second order explicit exponential {R}unge-{K}utta methods for
  stochastic differential equations.
\newblock {\em SIAM J. Sci. Comput.}, 39(6):A2857--A2878, 2017.

\bibitem{MR2132573}
B.~Leimkuhler and S.~Reich.
\newblock {\em Simulating {H}amiltonian dynamics}, volume~14 of {\em Cambridge
  Monographs on Applied and Computational Mathematics}.
\newblock Cambridge University Press, Cambridge, 2004.

\bibitem{MR1432623}
M.~Liao.
\newblock Random motion of a rigid body.
\newblock {\em J. Theoret. Probab.}, 10(1):201--211, 1997.

\bibitem{mqr99}
R.~I. McLachlan, G.~R.~W. Quispel, and N.~Robidoux.
\newblock Geometric integration using discrete gradients.
\newblock {\em R. Soc. Lond. Philos. Trans. Ser. A Math. Phys. Eng. Sci.},
  357(1754):1021--1045, 1999.

\bibitem{MR1897950}
G.~N. Milstein, Yu.~M. Repin, and M.~V. Tretyakov.
\newblock Symplectic integration of {H}amiltonian systems with additive noise.
\newblock {\em SIAM J. Numer. Anal.}, 39(6):2066--2088, 2002.

\bibitem{Milstein2004}
G.~N. Milstein and M.~V. Tretyakov.
\newblock {\em Stochastic Numerics for Mathematical Physics}.
\newblock Scientific Computation. Springer-Verlag, Berlin, 2004.

\bibitem{m14}
Y.~Miyatake.
\newblock An energy-preserving exponentially-fitted continuous stage
  {R}unge-{K}utta method for {H}amiltonian systems.
\newblock {\em BIT}, 54(3):777--799, 2014.

\bibitem{mb16}
Y.~Miyatake and J.~C. Butcher.
\newblock A characterization of energy-preserving methods and the construction
  of parallel integrators for {H}amiltonian systems.
\newblock {\em SIAM J. Numer. Anal.}, 54(3):1993--2013, 2016.

\bibitem{qm08}
G.~R.~W. Quispel and D.~I. McLaren.
\newblock A new class of energy-preserving numerical integration methods.
\newblock {\em J. Phys. A}, 41(4):045206, 7, 2008.

\bibitem{MR1270017}
J.~M. Sanz-Serna and M.~P. Calvo.
\newblock {\em Numerical {H}amiltonian problems}, volume~7 of {\em Applied
  Mathematics and Mathematical Computation}.
\newblock Chapman \& Hall, London, 1994.

\bibitem{h08}
H.~Schurz.
\newblock Analysis and discretization of semi-linear stochastic wave equations
  with cubic nonlinearity and additive space-time noise.
\newblock {\em Discrete Contin. Dyn. Syst. Ser. S}, 1(2):353--363, 2008.

\bibitem{Seesselberg1994}
M.~Seesselberg, H.~P. Breuer, F.~Petruccione, J.~Honerkamp, and H.~Mais.
\newblock Simulation of one-dimensional noisy hamiltonian systems and their
  application to particle storage rings.
\newblock {\em Z. Phys.}, C62:63--73, 1994.

\bibitem{st15}
M.~J. Senosiain and A.~Tocino.
\newblock A review on numerical schemes for solving a linear stochastic
  oscillator.
\newblock {\em BIT}, 55(2):515--529, 2015.

\bibitem{smh04}
A.~H. Str{\o}mmen~Melb{\o} and D.~J. Higham.
\newblock Numerical simulation of a linear stochastic oscillator with additive
  noise.
\newblock {\em Appl. Numer. Math.}, 51(1):89--99, 2004.

\bibitem{tshy19}
N.~Tasaka, S.~Satoh, T.~Hatanaka, and K.~Yamada.
\newblock Stochastic stabilization of rigid body motion of a spacecraft on
  {SE(3)}.
\newblock {\em International Journal of Control}, 0(0):1--8, 2019.

\bibitem{MR3246903}
G.~Vilmart.
\newblock Weak second order multirevolution composition methods for highly
  oscillatory stochastic differential equations with additive or multiplicative
  noise.
\newblock {\em SIAM J. Sci. Comput.}, 36(4):A1770--A1796, 2014.

\bibitem{MR2644322}
J.~Walter, O.~Gonzalez, and J.~H. Maddocks.
\newblock On the stochastic modeling of rigid body systems with application to
  polymer dynamics.
\newblock {\em Multiscale Model. Simul.}, 8(3):1018--1053, 2010.

\bibitem{MR3787423}
B.~Wang and X.~Wu.
\newblock Functionally-fitted energy-preserving integrators for {P}oisson
  systems.
\newblock {\em J. Comput. Phys.}, 364:137--152, 2018.

\bibitem{wws13}
X.~Wu, B.~Wang, and W.~Shi.
\newblock Efficient energy-preserving integrators for oscillatory {H}amiltonian
  systems.
\newblock {\em J. Comput. Phys.}, 235:587--605, 2013.

\bibitem{MR3658901}
W.~Zhou, J.~Zhang, J.~Hong, and S.~Song.
\newblock Stochastic symplectic {R}unge-{K}utta methods for the strong
  approximation of {H}amiltonian systems with additive noise.
\newblock {\em J. Comput. Appl. Math.}, 325:134--148, 2017.

\end{thebibliography}

\end{document}